\documentclass[12pt]{amsart}

\usepackage{amssymb}
\usepackage{graphicx}
\usepackage{enumerate}
\usepackage{multirow}
\usepackage{amsmath,color}
\usepackage{hyperref}
\usepackage{url}
\usepackage[colorinlistoftodos]{todonotes}
\usepackage{fullpage}

\usepackage{float}

\DeclareMathOperator{\correspond}{\sideset{_A}{_E}{\mathop{\longleftrightarrow}}\ }

\newtheorem{theorem}{Theorem}[section]
\newtheorem{corollary}[theorem]{Corollary}
\newtheorem{lemma}[theorem]{Lemma}
\newtheorem{proposition}[theorem]{Proposition}

\numberwithin{equation}{section}
\theoremstyle{definition}
\newtheorem{remark}{Remark}[section]
\newtheorem{example}{Example}

\newcommand{\BM}{Bannai-Muzychuk criterion}

\newcommand{\I}{\mathcal{I}}
\newcommand{\J}{\mathcal{J}}

\usepackage{tikz}
\usetikzlibrary{calc}
\usetikzlibrary{positioning}
\usetikzlibrary{chains}
\usetikzlibrary{er}
\usetikzlibrary{graphs}
\usetikzlibrary{cd}
\usetikzlibrary{shapes}
\usetikzlibrary{arrows}
\usetikzlibrary{matrix}
\usetikzlibrary{intersections}

\title[Amorphic association schemes and fusions of triples]{Characterizations of amorphic association schemes in terms of fusing triples}
\author{Yanzhen Xiong}
\address{College of Science, National University of Defense Technology, Changsha, Hunan, PR China}
\email{xiongyanzhen@nudt.edu.cn}

\begin{document}

\begin{abstract}
Let $\mathcal{R}$ be an association scheme with nontrivial relations $A_1,\ldots,A_d$. 
We call $\mathcal{R}$ amorphic if every possible fusion of its nontrivial relations gives rise to a fusion scheme. 
We define the fusing-relations $3$-hypergraph of $\mathcal{R}$ to be the $3$-uniform hypergraph on the vertex set $\{A_1,\ldots,A_d\}$ such that $\{ A_i, A_j, A_k \}$ forms an edge if it fuses, i.e., fusing $A_i, A_j, A_k$ gives rise to a fusion scheme of $\mathcal{R}$. 
A $3$-uniform hypergraph is called a $3$-sunflower if, for the edges, the union is the set of vertices and the intersection consists of $2$ vertices.
In this paper, we prove that for $d\geq 5$, $\mathcal{R}$ is amorphic if its fusing-relations $3$-hypergraph contains two $3$-sunflowers.  
As a corollary, for $d\geq 5$, $\mathcal{R}$ is amorphic if and only if all triples of its nontrivial relations fuse. 
\end{abstract}

\maketitle

\section{Introduction}

Let $X$ be a finite set, and let $d$ be a positive integer. For $0\leq i\leq d$, let $A_i$ be an $X\times X$ $\{0,1\}$ matrix. We call the configuration $\mathcal{R} = \{ A_i : \ i=0,1,\ldots,d \}$ a \emph{symmetric $d$-class association scheme} if the following holds. 
\begin{enumerate}
    \item $A_0$ is the identity matrix $I$;
    \item $\sum_{i=0}^{d} A_i$ is the all-one matrix $J$;
    \item $A_i^{\top} = A_i$ for all $i=1,\ldots,d$; 
    \item there are \emph{intersection numbers} $p_{ij}^h$ such that $A_iA_j = \sum_{h=0}^d p_{ij}^h A_h$ for all $i,j = 0,1,\ldots,d$.
\end{enumerate}
Throughout, all association schemes are symmetric. 

We mention that there are equivalent definitions of association schemes in terms of relations or the relation graphs. When no confusions arise, we will simply use the term relation for the relation itself, or for its relation graph, or for its adjacency matrix. For instance, we could say that a relation $A_i$ is strongly regular if its relation graph is a strongly regular graph. 
The identity relation $A_0 = I$ is referred to be the trivial relation. Unless otherwise specified, when we refer to a relation, it is always a non-trivial relation. 

An association scheme spans the \emph{Bose-Mesner algebra}, which is closed under both the ordinary matrix multiplication and the Hamadard product $\circ$ (i.e., the entrywise multiplication). The Bose-Mesner algebra has another basis consisting of minimal \emph{idempotents} $\{E_j : \ j = 0,1,\ldots,d\}$, which satisfy the following. 
\begin{enumerate}
    \item $E_0 = \frac{1}{\left| X \right|}J$; 
    \item $\sum_{j=0}^d E_j = I$;
    \item $E_j^{\top} = E_j$ for all $j=0,1,\ldots,d$;
    \item there are \emph{Krein parameters} $q_{ij}^h$ such that $E_i\circ E_j = \frac{1}{\left| X \right|} \sum_{h=0}^{d} q_{ij}^h E_h$ for all $i,j=0,1,\ldots,d$. 
\end{enumerate}

Let $\mathcal{R}$ be an association scheme with relations $A_0,A_1,\ldots,A_d$ and idempotents $E_0,E_1,\ldots,E_d$. 
The \emph{first eigenmatrix} $P = (P_{ij})_{0\leq i,j\leq d}$ and the \emph{second eigenmatrix} $Q = (Q_{ij})_{0\leq i,j\leq d}$ are square matrices such that 
\begin{enumerate}
    \item $A_i = \sum_{j=0}^d P_{ji} E_j$ for all $i=0,1,\ldots,d$;
    \item $E_j = \frac{1}{\left| X \right|} \sum_{i=0}^{d} Q_{ij} A_i$ for all $j=0,1,\ldots,d$. 
\end{enumerate}

Here are some basic properties of the eigenmatrices. 
It holds $P_{j0}=1$ for all $j$ and $P_{0i} = p_{ii}^0 = k_i$, where $k_i$ is the valency of relation $A_i$ for all $i$. $\{ P_{ji} : \ 1\leq j\leq d\}$ are the restricted eigenvalues of $A_i$ for all $1\leq i\leq d$. Dually, we have $Q_{i0}=1$ for all $i$, $Q_{0j} = q_{jj}^0 = m_j$, where $m_j$ is the rank of idempotent $E_j$ for all $j$, and $\{ Q_{ij} : \ 1\leq i\leq d \}$ are the restricted dual eigenvalues of $E_j$ for all $1\leq j\leq d$. 

The \emph{principal parts} of $P$ and $Q$ are obtained by removing the first row and the first column, so they contain the restricted eigenvalues and restricted dual eigenvalues, respectively. 
When we say that an eigenmatrix or its principle part has a certain matrix form, we mean that it can be made into the matrix via permutations on the columns and permutations on the rows. And if we write an equality between an eigenmatrix and a matrix, unless otherwise specified, it implies that the relations and idempotents are already in the fixed order due to the matrix. 

For a subset $\I$ of indices, we write $A_\I = \sum_{i\in \I} A_i$ and $E_\I = \sum_{i\in \I} E_i$. Let $\mathcal{R} = \{A_i : \ i = 0,1,\ldots, d\}$ be an association scheme, with idempotents $\{E_j : \ j = 0,1,\ldots,d\}$. Let $\pi = \{ \pi(i) : \ i=0,1,\ldots,d' \}$ be a partition of $\{0,1,\ldots,d\}$ where $\pi(0) = \{0\}$, and define $\mathcal{R}_\pi$ be the configuration $\{ A_{\pi(i)} : \ i = 0,1\ldots,d' \}$. We call $\mathcal{R}_\pi$ a \emph{fusion scheme} of $\mathcal{R}$ if $\mathcal{R}_\pi$ is also an association scheme. 
For more information about association schemes, we refer the readers to \cite{BBIT21,BI84}. 

A graph is strongly regular if it is regular and it has exactly two restricted eigenvalues (except for one eigenvalue that is equal to the valency). For a strongly regular graph $G$, if there are integers $n$ and $t$ such that $G$ has $v=n^2$ vertices, $t(n-1)$ valency, and $-t$ and $n-t$ as its restricted eigenvalues, then we call $G$ a strongly regular graph of Latin square type if $n$ and $t$ are positive, and call $G$ a strongly regular graph of negative Latin square type if $n$ and $t$ are negative. 

In 1985, Gol'fand, Ivanov and Klin  \cite{GIK94} introduced the amorphic association schemes, which are association schemes where all possible fusion of relations will lead to a fusion scheme. 
For an amorphic $d$-class association scheme with $d\geq 3$, Ivanov showed that all relations in the association scheme are strongly regular of Latin square type or negative Latin square type \cite{GIK94}. 
In 1991, Ito, Munemasa and Yamada showed that the amorphic association schemes are formally self-dual, i.e., $P = Q$, and that if all relations in an association scheme are either all strongly regular of Latin square type, or all strongly regular of negative Latin square type, then the association scheme is amorphic \cite{IMY91}. 
In 2010, Van Dam and Muzychuk proved that an association scheme is amorphic if and only if the principle part of one of its eigenmatrices has the following form \cite{vDM10}, which we call it the canonical form in \cite{vDKX25}. 
\[
\begin{bmatrix}
         b_1 & a_2 & a_3 & \cdots & a_{d} \\
         a_1 & b_2 & a_3 & \cdots & a_{d} \\ 
         a_1 & a_2 & b_3 & \cdots & a_{d} \\ 
         \vdots & \vdots & \vdots & \ddots & \vdots \\
         a_1 & a_2 & a_3 & \cdots & b_{d} \\
    \end{bmatrix}.
\]

As a consequence of these known results, for a $d$-class association scheme $\mathcal{R}$ with $d\geq 3$, it is amorphic if and only if there exist integers $n,t_1,\ldots,t_d$, which are either all positive or all negative, such that its eigenmatrix $P$ has the following form. 
\[\begin{bmatrix} 
1 & t_1(n-1) & t_2(n-1) & t_3(n-1) & \cdots & t_d(n-1) \\
1 & n-t_1 & -t_2 & -t_3 & \cdots & -t_d \\
1 & -t_1 & n-t_2 & -t_3 & \cdots & -t_d \\
1 & -t_1 & -t_2 & n-t_3 & \cdots & -t_d \\
\vdots & \vdots & \vdots & \vdots & \ddots  & \vdots \\
1 & -t_1 & -t_2 & -t_3 & \cdots & n-t_d \\
\end{bmatrix}\]
Note that if all the relations in $\mathcal{R}$ are strongly regular of Latin square type, the integers $n,t_1,\ldots,t_d$ are all positive, and if all the relations are strongly regular of negative Latin square type, the integers $n,t_1,\ldots,t_d$ are all negative. 
For several relations in an association scheme, we say that they are \emph{strongly regular of the same type} if they are all strongly regular of Latin square type, or all strongly regular of negative Latin square type. 

It is conjectured by Ivanov that an association scheme must be amorphic if all its relations are strongly regular. However, the conjecture is proved to be wrong when the class $d$ is larger than $3$. In particular, Van Dam proved that, for a non-amorphic $4$-class association scheme in which all relations are strongly regular, its the first eigenmatrix $P$ must have the following form \cite{D3}. 
\begin{align}\label{eq:Ephemeral}
\begin{bmatrix}
         1 & k_1 & k_2 & k_2 & k_2 \\
         1 & b_1 & a_2 & a_2 & a_2 \\ 
         1 & a_1 & b_2 & b_2 & a_2 \\ 
         1 & a_1 & b_2 & a_2 & b_2 \\
         1 & a_1 & a_2 & b_2 & b_2 \\
    \end{bmatrix}.
\end{align}
In the same paper \cite{D3}, Van Dam gave a example of association scheme with $P$ being the form \eqref{eq:Ephemeral}. Later on, Ikuta and Munemasa construct two more such examples \cite{IM08,IM10}. 

For a tuple of relations in an association scheme, we say that it \emph{fuses}, or call it a \emph{fusing tuple} if fusing it gives rise to a fusion scheme.
Let $k\geq 2$ be an integer. A \emph{$k$-uniform hypergraph}, or a $k$-hypergraph for short, is a hypergraph whose edges are all size-$k$ sets. 
the \emph{fusing-relations $k$-hypergraph} of an association scheme is the hypergraph whose vertices are the relations such that $k$ vertices form an edge if the $k$-tuple of the corresponding relations fuses. 
The fusing-relations $2$-hypergraphs are called the \emph{fusing-relations graph} \cite{vDKX25}. 
In 2025, Van Dam, Koolen and Xiong gives three characterizations of amorphic association schemes \cite{vDKX25}. 

\begin{theorem}\cite[Theorem~3.2, Theorem~4.1, Theorem~6.3]{vDKX25}\label{thm:Mellifluous}
    Let $d\geq 3$ and let $\mathcal{R}$ be a class-$d$ association scheme. Then $\mathcal{R}$ is amorphic if one of the following happens. 
    \begin{enumerate}
        \item Every pair of its relations fuses;
        \item Its fusing-relations graph is connected but not a path;
        \item There are at most one relation that is neither strongly regular of Latin square type nor strongly regular of negative Latin square type.  
    \end{enumerate}
\end{theorem}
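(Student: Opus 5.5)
Since this theorem collects three results from \cite{vDKX25}, I plan to reduce all three parts to one target: showing that the principal part of $P$ (or of $Q$) has the canonical form of \cite{vDM10}. Amorphicity then follows from Van Dam and Muzychuk's theorem. The main tool is the Bannai--Muzychuk criterion. A partition $\pi$ of the relations gives a fusion scheme if and only if there is a partition $\pi^*$ of the idempotents with the same number of parts, such that each column sum $\sum_{i\in\pi(m)}P_{\cdot i}$ is constant on every block of $\pi^*$.

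For a single fusing pair $\{A_i,A_j\}$ the fused scheme has $d-1$ classes. So the criterion produces exactly two idempotents $E_h,E_l$ that are merged. Their rows satisfy
\[
P_{hm}=P_{lm}\ (m\neq i,j), \qquad P_{hi}+P_{hj}=P_{li}+P_{lj}.
\]
Similarly, for a fusing triple three rows agree off columns $i,j,k$, or two pairs of rows are merged. I record this as a map $\{i,j\}\mapsto\{h,l\}$ from fusing pairs of columns to pairs of rows. The map is injective, because the rows of $P$ are distinct, so two different column pairs cannot give the same row pair.

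\textbf{Part (1).} Every pair fuses, so the map is defined on all $\binom d2$ column pairs. Fix a column $i$ and look at the row pairs assigned to $\{i,j\}$ for $j\neq i$. Two such row pairs are forced to share a row; otherwise, combining the two coincidence conditions would make two rows of $P$ agree in too many coordinates, contradicting invertibility of $P$. I would then argue, using $d\geq 3$, that there is a row $\sigma(i)$ common to all these pairs. The conditions then say that row $\sigma(i)$ differs from the other rows only in column $i$. Reordering rows by $\sigma$ gives the canonical form directly.

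\textbf{Part (2).} The same local analysis applies along each edge of the fusing-relations graph. A vertex of degree $\geq 3$, or a cycle, supplies enough overlapping conditions to pin down $\sigma$ on that vertex and its neighbours. I would propagate this along the connected graph to get the canonical form. A path, by contrast, leaves a two-ended ambiguity; the exceptional form \eqref{eq:Ephemeral} shows that this ambiguity is genuine.

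\textbf{Part (3).} Here $d-1$ relations have eigenvalues $t_i(n_i-1)$, $n_i-t_i$, $-t_i$, with the sign conditions. First I show that all $n_i$ are equal. The eigenspaces of the relations refine into the $E_j$, and the trace and orthogonality relations (column $i$ of $P$ is orthogonal to the all-one column, weighted by the $m_j$) force a common $v=n^2$ and a common sign. Next I show that each $A_i$ has its eigenvalue $n-t_i$ on exactly one idempotent, and that these idempotents are distinct for different $i$. The remaining relation is $J-I-\sum A_i$, so its eigenvalues are then forced, and it is strongly regular of the same type. Ito--Munemasa--Yamada \cite{IMY91} then gives amorphicity.

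I expect the main obstacle to be the combinatorial step in parts (1) and (2): proving that the row pairs attached to the column pairs at a fixed column really share a single row. This is where $d\geq 3$, and the connected-but-not-a-path hypothesis, must be used carefully. My first approach would be to take two column pairs $\{i,j\}$ and $\{i,k\}$ whose row pairs are disjoint and derive a linear dependency among the rows of $P$.
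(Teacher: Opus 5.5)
The paper gives no proof of this theorem; it quotes \cite{vDKX25}. Judged on its own, your plan has a genuine gap at its central step.

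\textbf{Part (1).} Write $\phi(ij)$ for the idempotent pair attached to a fusing pair $\{A_i,A_j\}$. You claim that disjoint $\phi(ij)=\{h,l\}$ and $\phi(ik)=\{h',l'\}$ would force two rows of $P$ to agree in too many coordinates. They do not. The first set of Bannai--Muzychuk conditions constrains only rows $h,l$, the second only rows $h',l'$, and no pair of rows is constrained twice. With disjoint images, the two sets of conditions say exactly that $\{A_i,A_j,A_k\}$ is a type-2 fusing triple with idempotent blocks $\{h,l\},\{h',l'\}$; this is the pair contraction lemma, compare Proposition~\ref{prop:Ubiquitous}. That is not a contradiction, so your planned ``linear dependency'' will not appear.

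Part (1) can be repaired using the third pair $\{A_j,A_k\}$, which fuses there. In $Q$, rows $j$ and $k$ coincide outside $\phi(ij)\cup\phi(ik)$. They differ at both indices of $\phi(jk)$: they agree elsewhere, have equal sums there, and are distinct. Hence each of the three images lies in the union of the other two. With injectivity, this forces them to form a triangle $\{a,b\},\{a,c\},\{b,c\}$. This gives the pairwise intersections you wanted. It also excludes a triangle of images at a vertex when $d=4$, since that would give $\phi(jk)=\phi(il)$. Finally, it makes $\sigma$ a bijection with $\phi(ij)=\{\sigma(i),\sigma(j)\}$. Note that the correct conclusion is that column $i$ is constant off row $\sigma(i)$. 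Rows $\sigma(i)$ and $\sigma(j)$ differ in columns $i$ and $j$, not only in column $i$.

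\textbf{Part (2).} This is where the plan breaks down. The hypothesis only guarantees the edges of, say, a $K_{1,3}$ when $d=4$. So the neighbours of a vertex of degree $\ge 3$ need not fuse with one another, and the third-pair argument is unavailable. As shown above, the local conditions at that vertex are consistent with disjoint images. ``Pin down $\sigma$ and propagate'' therefore has no mechanism behind it. What is needed is a global reduction of the kind this paper carries out for triples in Section 3. Fuse a fusing pair $\{A_i,A_j\}$; by the contraction lemma, the fused relation is adjacent to all neighbours of $i$ and $j$. Work in that fusion scheme to show that many relations are strongly regular of the same type, then finish with part (3). Base cases in which the contracted graph is a path need separate work. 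Also, \eqref{eq:Ephemeral} is not a path example: its fusing-relations graph is $K_3\sqcup K_1$.

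\textbf{Part (3).} Your key claim, that $n-t_i$ sits on a single idempotent $E_{\sigma(i)}$ with $\sigma$ injective, is only asserted. Orthogonality with the all-one column gives just the multiplicities $t_i(n-1)$. It says nothing about signs or about how different relations interact. The missing tool is cross-orthogonality,
\[
\sum_h m_hP_{hi}P_{hl}=\mathrm{tr}(A_iA_l)=0 .
\]
For two relations of the same type, this trace equals $n^2$ times the total multiplicity of the idempotents on which $A_i$ has eigenvalue $n-t_i$ and $A_l$ has eigenvalue $n-t_l$. So these idempotent sets are pairwise disjoint. You then have $d-1$ nonempty disjoint sets among $d$ idempotents. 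If all are singletons, $J-I-\sum A_i$ is forced to be of the same type, and Ito--Munemasa--Yamada applies. Otherwise one set has two elements, and the last relation would have a single restricted eigenvalue, which is impossible. A common sign cannot be extracted from these relations. It should be taken as a hypothesis, which is how the paper applies (3).
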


\begin{remark}
    In \cite{vDKX25}, the authors give examples whose fusing-relations graphs are path $P_d$ or the disconnected graph $K_{d-1}\sqcup K_1$ for general $d$. They also dualize the results in terms of fusing-idempotents graph and strongly regular idempotents of Latin square type. 
\end{remark}

To generalize Theorem~\ref{thm:Mellifluous}~(3), Van Dam, Koolen and Xiong study the non-amorphic association scheme with exactly two relations that are neither strongly regular of Latin square type nor strongly regular of negative Latin square type \cite{vDKX26}. 

\begin{theorem}\cite[Theorem~5.1]{vDKX26}
  For all $d>1$, there exists a $d$-class association scheme with exactly $d-2$ relations that are strongly regular of Latin square type.     
\end{theorem}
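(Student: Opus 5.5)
The plan is an explicit construction: start from an amorphic scheme of Latin square type, keep $d-2$ of its relations unchanged, and replace the rest by two relations that are not of Latin square type. I would verify that the result is an association scheme by computing its first eigenmatrix directly.

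\emph{Small cases.} For $d=2$ we need a strongly regular graph that is not of Latin square type, for instance the Paley graph on $p$ vertices with $p\equiv 1 \pmod 4$ prime; $p$ is not a square, so it is not of Latin square type. I would give $d=2,3$ separately by hand if the general construction needs larger parameters.

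\emph{General case.} I would work with translation schemes on an abelian group $G$, because then the scheme axioms reduce to a statement about characters. A partition $G=\{0\}\cup D_1\cup\cdots\cup D_d$ into symmetric sets gives a translation scheme exactly when the induced partition of the character group $\widehat G$ has the same number $d+1$ of cells, and each character sum $\chi(D_i)=\sum_{g\in D_i}\chi(g)$ is constant on each cell.

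Concretely, I would take $G=\mathbb{F}_q^2\times H$, where:
\begin{itemize}
\item $\mathbb{F}_q^2$ carries a net (a set of parallel classes of $AG(2,q)$) grouped into $d-2$ classes, so that these relations are strongly regular of Latin square type;
\item $H$ is a small group carrying a two-class scheme not of Latin square type (for example Paley of prime order $p\equiv 1\pmod 4$);
\item $D_1,\ldots,D_{d-2}$ are the direction classes of the net, placed on $\mathbb{F}_q^2\times\{0\}$ and possibly thickened by $H$;
\item the two remaining sets $D_{d-1}$ and $D_d$ absorb everything else, split according to the two Paley classes in the $H$-coordinate.
\end{itemize}

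\emph{Checks.} First I would compute all $\chi(D_i)$ for $\chi=\chi_w\otimes\psi$ and show they take exactly $d+1$ distinct value-vectors. This gives the eigenmatrix $P$. Second, from $P$ I would read off that $D_1,\ldots,D_{d-2}$ still have exactly the two restricted eigenvalues $-t_i$ and $n-t_i$ with $|G|=n^2$. Third, I would show that $D_{d-1}$ and $D_d$ fail to be of Latin square type. Either they have more than two restricted eigenvalues, or they are strongly regular with restricted eigenvalues of the wrong form, for instance irrational Paley-type eigenvalues $\frac{-1\pm\sqrt p}{2}$ or a non-square vertex count $|G|$.

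\emph{Main obstacle.} The hard step is arranging the two exceptional sets so that the dual partition has exactly $d+1$ cells. The character sums of $D_{d-1},D_d$ vary with the $H$-character $\psi$. That variation must not split the dual cells already determined by the net directions; otherwise there are too many dual classes and no scheme.

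My first attempt was to split a single line $\langle v\rangle$ of $AG(2,q)$ into square and non-square multiples of $v$. This fails: the split refines every dual direction by the square class of $\langle w,v\rangle$, so the dual partition is too fine. That is why I would instead separate the "bad" part into an independent coordinate $H$, whose characters interact with the net only through the trivial character.

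If this turns out too rigid, the fallback is to search for a fusion of a known cyclotomic scheme on $\mathbb{F}_{q^2}$. There one fuses cyclotomic classes into $d-2$ Latin-square-type unions, using the semiprimitive case, plus two leftover unions whose Gauss periods are not of the form $-t,\ n-t$. Admissibility of the fusion would be checked with the Bannai--Muzychuk criterion on the known cyclotomic eigenmatrix.
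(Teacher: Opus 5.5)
There is a genuine gap. For $d\ge 3$ no scheme is actually exhibited, and the design you sketch cannot make $D_1,\dots,D_{d-2}$ of Latin square type. Latin square type is a property of the graph on the whole vertex set $G$. It requires $|G|=n^2$, and it requires every nontrivial character of $G$ to give a value in $\{-t_i,\ n-t_i\}$ for this $n$.

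With your concrete choice $H=\mathbb{Z}_p$ (Paley, $p$ prime) you get $|G|=q^2p$, which is not a square. So no relation of your scheme can be of Latin square type at all. The values $-t_i,\ q-t_i$ are Latin square parameters for $\mathbb{F}_q^2$, not for $G$. Your third check even offers ``a non-square vertex count $|G|$'' as a way to make $D_{d-1},D_d$ fail; it would make every relation fail.

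Taking $|H|$ to be a square does not rescue the construction as described. Write $N_i\subseteq\mathbb{F}_q^2\setminus\{0\}$ for the $i$th union of $t_i$ parallel classes; once $d-2\ge 2$, each $N_i$ is a proper nonempty union.
\begin{itemize}
\item If $D_i=N_i\times\{0\}$, then $(\chi_0\otimes\psi)(D_i)=|D_i|$ for every $\psi\neq 1$, so $D_i$ is disconnected. A disconnected graph of Latin square type must be $nK_n$. Then $D_i\cup\{0\}$ would be a subgroup of order $\sqrt{|G|}=q\sqrt{|H|}$, which a proper union of lines of $\mathbb{F}_q^2$ never is.
\item If $D_i=N_i\times H$, then $(\chi_w\otimes\psi)(D_i)=0$ for all $\psi\neq 1$, while the characters with $\psi=1$ give $|H|(q-t_i)$ and $-|H|t_i$. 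For $t_i<q$ this gives three restricted eigenvalues. For $t_i=q$ you get the complete multipartite graph with parts of size $q|H|\neq\sqrt{|G|}$, which is not of Latin square type.
\end{itemize}
So in neither reading are the net classes of Latin square type on $G$. Their character sums at characters nontrivial on $H$ must themselves lie in $\{-t_i,n-t_i\}$ with $n=\sqrt{|G|}$. That forces $D_i$ to be designed in $G$ as a whole, for example from subgroups of $G$ of order $n$ meeting pairwise trivially, not inherited from a net on the $\mathbb{F}_q^2$-coordinate.

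Consequently, the step that carries all the content is left open: a partition of $G$ with exactly $d+1$ dual classes, $d-2$ classes of Latin square type with respect to $|G|$, and two classes that are not. This is exactly what you label the ``main obstacle''. The cyclotomic fallback is a search strategy, not an argument, and $d=3$ is deferred. Only $d=2$ is settled, and your Paley example of prime order is correct there.

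For comparison, the paper does not prove this statement itself but quotes it from \cite{vDKX26}. As Section~5 explains, there the schemes are constructed from the Brouwer--Pasechnik graphs, a specific geometric family, rather than from a product of a net with an independent small scheme.
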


In this paper, we aim to generalize Theorem~\ref{thm:Mellifluous}~(1) and (2).

A \emph{$k$-sunflower} is a $k$-hypergraph such that the intersection of all its edges is a set of size $k-1$, and the union of all its edges is the vertex set. The intersection is called the \emph{core}. Two $k$-sunflowers on a common vertex set are called \emph{different} if their cores are not the same. 
We call a hypergraph $\mathcal{H}_1$ is a subhypergraph of $\mathcal{H}_2$ if every edge of $\mathcal{H}_1$ is an edge of $\mathcal{H}_2$. 

\begin{figure}[htbp]
    \centering
    \begin{tikzpicture}[scale=2.5,
    dot/.style={circle, fill=black, minimum size=2mm, inner sep=0pt}]

  \node[dot] (p5) at (0, 1)    {}; \node at (0,    0.85)  {\small 5};
  \node[dot] (p6) at (0.901, 0.434) {}; \node at (0.85,  0.3)  {\small 6};
  \node[dot] (p7) at (0.901, -0.434) {}; \node at (0.9, -0.58)  {\small 7};
  \node[dot] (p1) at (0.434, -0.901) {}; \node at (0.334, -0.9)  {\small 1};
  \node[dot] (p2) at (-0.434, -0.901) {}; \node at (-0.334, -0.9) {\small 2};
  \node[dot] (p3) at (-0.901, -0.434) {}; \node at (-0.9, -0.58) {\small 3};
  \node[dot] (p4) at (-0.901, 0.434) {}; \node at (-0.85, 0.3)  {\small 4};
  
  \draw[thick]
    (-0.57,-1) .. controls (-0.6, -1) and (-1.2, -0.5) .. (-1,-0.334) .. controls (-0.9, -0.2) and (0.8, -0.8) .. (0.65,-1) .. controls (0.4, -1.2) and (-0.4, -1.2) .. cycle;
    
  \draw[thick] 
    (-0.54,-1) .. controls (-0.6, -1) and (-1.15, 0.55) .. (-1,0.6) .. controls (-0.7, 0.65) and (0.8, -1) .. (0.6,-1) .. controls (0.5, -1.15) and (-0.5, -1.15) .. cycle;

  \draw[thick] 
    (0.6,-1) .. controls (0.7, -1) and (0.3, 1.1) .. (0,1.1) .. controls (-0.3, 1.1) and (-0.7, -1) .. (-0.6,-1) .. controls (-0.5, -1.1) and (0.5, -1.1) .. cycle;

  \draw[thick] 
    (0.54,-1) .. controls (0.6, -1) and (1.15, 0.55) .. (1,0.6) .. controls (0.7, 0.65) and (-0.8, -1) .. (-0.6,-1) .. controls (-0.5, -1.15) and (0.5, -1.15) .. cycle; 

  \draw[thick]
    (0.57,-1) .. controls (0.6, -1) and (1.2, -0.5) .. (1,-0.334) .. controls (0.9, -0.2) and (-0.8, -0.8) .. (-0.65,-1) .. controls (-0.4, -1.2) and (0.4, -1.2) .. cycle;

\end{tikzpicture}
    \caption{A $3$-sunflower with vertex set $\{1,2,3,4,5,6,7\}$, edge set $\{ 123,124,125,126,127 \}$, and core $\{1,2\}$.}
    \label{fig:sunflower}
\end{figure}
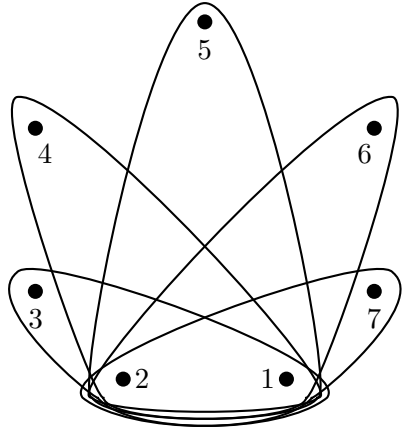

Here is the main result of this paper. We mention that the dual result is also true. 

\begin{theorem}\label{thm:six}
    Let $d\geq 5$. A $d$-class association scheme is amorphic if its fusing-relations $3$-hypergraph contains two different $3$-sunflowers as its subhypergraphs. 
\end{theorem}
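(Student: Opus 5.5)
Let the two sunflowers have cores $\{A_1,A_2\}$ and $\{A_a,A_b\}$. Since the cores differ, they share at most one relation. So up to relabelling there are two cases: the cores share a relation, say $\{1,2\}$ and $\{1,3\}$, or the cores are disjoint, say $\{1,2\}$ and $\{3,4\}$. The overall plan is to show, from the fusing triples alone, that all relations except possibly one are strongly regular of the same type. Theorem~\ref{thm:Mellifluous}~(3) then finishes the proof. In the disjoint case an alternative is to derive, via the fusion criterion, that enough pairs fuse to make the fusing-relations graph connected and not a path, and then apply Theorem~\ref{thm:Mellifluous}~(2).

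The basic tool is the Bannai--Muzychuk criterion for a partition $\pi$. The fusion $\mathcal{R}_\pi$ is a scheme if and only if there is a partition $\pi^*$ of the idempotents such that each fused relation $A_{\pi(i)}$ has constant eigenvalue on each block of $\pi^*$, with the number of blocks equal to the number of fused relations. A fusing triple $\{A_1,A_2,A_k\}$ produces a $(d-2)$-class scheme. In that scheme the fused relation $A_1+A_2+A_k$ and the untouched relations $A_j$ ($j\notin\{1,2,k\}$) have only $d-2$ distinct restricted eigenvalue patterns. So the idempotents $E_1,\ldots,E_d$ fall into $d-2$ classes, with two merges; either one block of size $3$, or two blocks of size $2$. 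On these merged idempotents every $A_j$ with $j\notin\{1,2,k\}$ is constant, and so is $A_1+A_2+A_k$. Running $k$ over all $d-2\geq 3$ relations outside the core gives many such constraints on the principal part of $P$. In particular, every relation outside the core is constant on the merged blocks coming from all the other petals.

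The key step is a combinatorial analysis of how these merged idempotent blocks vary with $k$. I would show that the blocks attached to different petals must overlap in a controlled way. Combining the constraints, each column $j$ outside the core (in the $1,2$-sunflower) should take only two values across the idempotents. This means that $A_j$ is strongly regular. The same argument applied to the second sunflower gives the same conclusion for relations outside $\{3,4\}$ (or $\{1,3\}$). Together, the two sunflowers cover every relation, so every $A_j$ is strongly regular. I would then use the fused relations $A_1+A_2+A_k$ and $A_3+A_4+A_k$, which are also constant on blocks, to show that the two-valued columns have the Latin-square pattern $n-t_j$, $-t_j$ with a common $n$ and a common sign.

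I expect the main obstacle to be exactly this matching step. Strong regularity alone does not suffice, as the $4$-class example \eqref{eq:Ephemeral} shows. So one must exclude configurations in which the idempotent merges differ from petal to petal in the way that example exhibits. Here $d\geq 5$ is needed: it guarantees at least three petals, so any putative exceptional eigenvalue pattern is contradicted by a third petal. I would first try to show that the merged blocks for the petal $k$ are $\{E_{\sigma(1)},E_{\sigma(2)},E_{\sigma(k)}\}$ for a fixed bijection $\sigma$. Once this is shown, the principal part of $P$ is forced into the canonical form, and amorphicity follows by the criterion of Van Dam and Muzychuk \cite{vDM10}.
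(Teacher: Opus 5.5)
Your proposal has a genuine gap: the step that carries all the weight is announced rather than argued. You say the merged idempotent blocks of different petals ``overlap in a controlled way'', that combining the constraints makes every column outside the core two-valued, and that these columns share a common $n$ and sign. You give no mechanism for any of this.

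The bookkeeping you do set up is correct but does not suffice. Each petal $\{A_1,A_2,A_k\}$ merges the idempotents into one $3$-block or two $2$-blocks, and every $A_j$, $j\notin\{1,2,k\}$, is constant on those blocks. Nothing in this stops the merges coming from different petals from piling up on the same few idempotents, so two-valuedness does not follow.

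Your intermediate target, a fixed bijection $\sigma$ with merged blocks $\{E_{\sigma(1)},E_{\sigma(2)},E_{\sigma(k)}\}$, says every petal is a type-$1$ fusion. Comparing two petals cannot give this: the paper's two-triple case analysis (Bannai--Muzychuk plus Lemma~\ref{lem:eigenmatrix}) leaves Cases II.3, II.5, III.6 and III.9, which involve type-$2$ fusions, unexcluded. So reaching $\sigma$ needs a global argument that your proposal does not contain. Separately, when the cores share a relation (e.g.\ $\{1,2\}$ and $\{1,3\}$), the two sunflowers do not cover $A_1$. You get only $d-1$ relations, which suffices for Theorem~\ref{thm:Mellifluous}~(3) but not for the canonical form you aim at.

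The missing idea is the contraction lemma for triples, Lemma~\ref{lem:contraction}. If $\{A_1,A_2,A_k\}$ and $\{A_1,A_2,A_\ell\}$ both fuse, then $\{A_{1,2,k},A_\ell\}$ is a fusing pair of $\mathcal{R}_{1,2,k}$. This turns a sunflower into a spanning star centred at $A_{1,2,k}$ in the fusing-relations graph of the $(d-2)$-class scheme $\mathcal{R}_{1,2,k}$.

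For $d\ge 6$ the star has at least three leaves, so Theorem~\ref{thm:Mellifluous}~(2) makes $\mathcal{R}_{1,2,k}$ amorphic. That gives at once that all $A_\ell$, $\ell\notin\{1,2,k\}$, are strongly regular \emph{of the same type}, which is exactly the matching step you could not supply. Choosing petals of the two sunflowers that meet in at most one index gives $d-1$ such relations, and Theorem~\ref{thm:Mellifluous}~(3) finishes.

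For $d=5$ the star has only two leaves and is a path, so another argument is needed. The paper shows that the core of each sunflower is itself a fusing pair (Lemma~\ref{lem:core}), which again uses the contraction lemma, via the $2$-class fusion $\mathcal{R}_{1,2,3,4}$. Then $\mathcal{R}_{1,2}$ is a $4$-class scheme whose fusing-relations graph contains $K_{1,3}$. Your sketch neither separates $d=5$ from $d\ge 6$ nor offers a substitute for either step.
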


\begin{corollary}\label{cor:five}
    Let $d\geq 5$. A $d$-class association scheme is amorphic if all triples of relations fuse. 
\end{corollary}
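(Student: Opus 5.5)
The plan is to derive Corollary~\ref{cor:five} directly from Theorem~\ref{thm:six}. Let $\mathcal{R}$ be a $d$-class association scheme with $d\geq 5$ in which every triple of relations fuses. Then the fusing-relations $3$-hypergraph of $\mathcal{R}$ is the complete $3$-uniform hypergraph on the vertex set $\{A_1,\ldots,A_d\}$. It therefore suffices to exhibit two different $3$-sunflowers among its subhypergraphs.

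Fix the two cores $\{A_1,A_2\}$ and $\{A_1,A_3\}$; these are different pairs, and this needs $d\geq 3$. Define
\[
\mathcal{S}_1=\bigl\{\{A_1,A_2,A_j\} : 3\leq j\leq d\bigr\},\qquad
\mathcal{S}_2=\bigl\{\{A_1,A_3,A_j\} : j\in\{2,4,5,\ldots,d\}\bigr\}.
\]
All these triples fuse by hypothesis, so both are subhypergraphs of the fusing-relations $3$-hypergraph. The checks are routine:
\begin{enumerate}
\item In $\mathcal{S}_1$ the union of the edges is $\{A_1,\ldots,A_d\}$.
\item In $\mathcal{S}_1$ the intersection of the edges is exactly $\{A_1,A_2\}$. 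This holds because there are at least two edges (as $d\geq 4$), and two distinct third vertices have no common element besides the core.
\item The same two checks hold for $\mathcal{S}_2$, with core $\{A_1,A_3\}$.
\end{enumerate}
So $\mathcal{S}_1$ and $\mathcal{S}_2$ are $3$-sunflowers with different cores.

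Theorem~\ref{thm:six} then applies and shows that $\mathcal{R}$ is amorphic.

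For the converse direction, suppose $\mathcal{R}$ is amorphic. By definition every fusion of its nontrivial relations gives a fusion scheme, and in particular every triple fuses. Together with the above, this gives the "if and only if" statement mentioned in the abstract.

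There is no real obstacle in this argument. The only point to watch is the definition of a sunflower: the intersection of the edges must have size exactly $2$. That requires at least two edges in each sunflower, and this is guaranteed because $d\geq 5$.
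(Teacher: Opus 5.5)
Your proposal is correct and matches the paper's intended argument: the corollary is stated as an immediate consequence of Theorem~\ref{thm:six}, because the complete $3$-uniform fusing-relations hypergraph on $d\geq 5$ vertices contains two $3$-sunflowers with different cores, such as the ones with cores $\{A_1,A_2\}$ and $\{A_1,A_3\}$ that you exhibit. Your remark on the converse is also right: an amorphic scheme has every triple fuse, which gives the ``if and only if'' stated in the abstract.
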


\begin{remark}
    For $4$-class association schemes, that its fusing-relation $3$-hypergraph is complete is equivalent to that all its relations are strongly regular. Therefore, there already exist non-amorphic $4$-class association schemes whose fusing-relation $3$-hypergraph is complete. 
\end{remark}

In the rest of the paper, we will prove our main results and their dual result, and conclude the paper with an open problem. 

\section{Generalizing the contraction lemma}

Let $\mathcal{R}$ be an association scheme with relations $A_0,A_1,\ldots,A_d$. Assume that $\{A_{i_1}, \ldots,A_{i_k}\}$ is a fusing tuple. 
We denote by $\mathcal{R}_{i_1,\ldots,i_k}$ the fusion scheme obtained from $\mathcal{R}$ by fusing the fusing tuple. 
We write $A_{i_1, \ldots, i_k}$ for the union of these relations, namely, $A_{i_1, \ldots, i_k} = A_{i_1} + \cdots + A_{i_k}$ as matrices. Hence, $A_{i_1, \ldots, i_k}$ is a relation in the association scheme $\mathcal{R}_{i_1,\ldots,i_k}$. Similarly, we define $E_{i,j} = E_i + E_j$ and so on. 

Van Dam et al. introduce the ``contraction lemma'' \cite[Lemma 5.1]{vDKX25}, which is a powerful tool in the study of fusing pairs of association schemes. 
Here is the key of the contraction lemma. 
Assume that $\{A_i, A_j\}$ and $\{ A_j, A_k\}$ are both fusing pairs of $\mathcal{R}$. Then $\{ A_{i,j}, A_k \}$ is a fusing pair of $\mathcal{R}_{i,j}$. 

In this section, we generalize the key of the contraction lemma for fusing triples, which will play a vital role in the proof of our main results.

\begin{lemma}\label{lem:contraction}
    Let $\mathcal{R}$ be a $d$-class association scheme with $d\geq 4$. Assume that $\{ A_i, A_j, A_k \}$ and $\{ A_j, A_k, A_\ell \}$ are two fusing triples of $\mathcal{R}$. Then $\{ A_{i,j,k}, A_\ell \}$ is a fusing pair of $\mathcal{R}_{i,j,k}$. 
\end{lemma}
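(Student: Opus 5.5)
The plan is to avoid eigenmatrices and the Bannai--Muzychuk criterion altogether and instead work with the Bose--Mesner algebras themselves, characterizing each fusion scheme by its span. Let $\mathcal{A}$ be the Bose--Mesner algebra of $\mathcal{R}$. A partition $\pi$ of $\{0,1,\ldots,d\}$ with $\pi(0)=\{0\}$ gives a fusion scheme exactly when the subspace $\mathcal{A}_\pi=\operatorname{span}\{A_{\pi(s)}\}$ is closed under ordinary matrix multiplication. Indeed, $\mathcal{A}_\pi$ automatically contains $I$ and $J$, is closed under transposition (all $A_m$ are symmetric), and is closed under $\circ$, because the $A_{\pi(s)}$ are disjoint $0/1$ matrices. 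If $\mathcal{A}_\pi$ is also closed under products, then each product $A_{\pi(s)}A_{\pi(t)}$ is a linear combination of the basis $\{A_{\pi(s)}\}$, which supplies the intersection numbers of $\mathcal{R}_\pi$. I will state this equivalence explicitly at the start, since it is the only structural input the argument uses.

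Next I rephrase the goal. The triple $\{A_i,A_j,A_k\}$ fuses in $\mathcal{R}$, so $\mathcal{R}_{i,j,k}$ is a scheme. The pair $\{A_{i,j,k},A_\ell\}$ fuses in $\mathcal{R}_{i,j,k}$ exactly when the partition of $\{0,\ldots,d\}$ that merges $i,j,k,\ell$ into one block (all other indices singletons) gives a fusion scheme of $\mathcal{R}$. Let $\mathcal{B}$ denote the span of this fused configuration. By the first paragraph, it suffices to show that $\mathcal{B}$ is closed under multiplication. Note that $i,j,k,\ell$ are four distinct indices, which requires $d\ge 4$; this is where the hypothesis on $d$ enters.

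The key step is to identify $\mathcal{B}$ as an intersection of two known subalgebras. Let $\mathcal{B}_1$ be the Bose--Mesner algebra of $\mathcal{R}_{i,j,k}$ and $\mathcal{B}_2$ that of $\mathcal{R}_{j,k,\ell}$. Both are subalgebras of $\mathcal{A}$ by hypothesis. An element $\sum_m c_mA_m\in\mathcal{A}$ lies in $\mathcal{B}_1$ if and only if $c_i=c_j=c_k$, because the $A_m$ are linearly independent and $\mathcal{B}_1$ has basis $\{A_{i,j,k}\}\cup\{A_m : m\notin\{i,j,k\}\}$. Similarly, the element lies in $\mathcal{B}_2$ if and only if $c_j=c_k=c_\ell$. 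Hence $\mathcal{B}_1\cap\mathcal{B}_2$ consists exactly of the elements with $c_i=c_j=c_k=c_\ell$, which is precisely $\mathcal{B}$. An intersection of subalgebras is a subalgebra, so $\mathcal{B}$ is closed under multiplication, and the lemma follows.

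I expect essentially no obstacle. The only point requiring care is the justification in the first paragraph that multiplicative closure of the span is equivalent to fusing: both directions should be spelled out, namely extracting the intersection numbers from closure, and conversely noting that a fusion scheme's span is its Bose--Mesner algebra. The linear-independence step used in the intersection computation also deserves to be stated rather than left implicit.
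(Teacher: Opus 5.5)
Your proof is correct, and it takes a genuinely different and much shorter route than the paper's.

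The paper argues through the Bannai--Muzychuk criterion on the second eigenmatrix $Q$. It sorts the two fusing triples by type (one idempotent block of size $3$, or two of size $2$) and enumerates the $18$ possible overlap patterns of the idempotent blocks. Most patterns are eliminated with Lemma~\ref{lem:eigenmatrix}: two rows of $Q$ (or of the contracted $Q'$) would coincide, or four rows would have only three non-constant columns. In the five surviving configurations (I.3, II.3, II.5, III.6, III.9), the paper writes down the idempotent partition that makes $\{A_{1,2,3},A_4\}$ fuse in $\mathcal{R}_{1,2,3}$.

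You bypass all of this. You observe that the span of the partition merging $i,j,k,\ell$ is exactly the intersection of the Bose--Mesner algebras of $\mathcal{R}_{i,j,k}$ and $\mathcal{R}_{j,k,\ell}$. In other words, the join of two fusion partitions is again a fusion partition.

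Your approach gains the following:
\begin{enumerate}
\item Nothing depends on the tuples being triples or on the size of their overlap.
\item The pair version of the contraction lemma from \cite{vDKX25} is the same one-line argument.
\item You get both conclusions at once: $\{A_{i,j,k},A_\ell\}$ fuses in $\mathcal{R}_{i,j,k}$ and $\{A_i,A_{j,k,\ell}\}$ fuses in $\mathcal{R}_{j,k,\ell}$.
\item The dual lemma for idempotents follows by reading the same intersection in the idempotent bases.
\end{enumerate}

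The paper's case analysis additionally yields information on the idempotent side: which correspondences can occur for two overlapping fusing triples, and what the idempotent partition of $\mathcal{R}_{i,j,k,\ell}$ looks like. However, the later proofs of Lemma~\ref{lem:core} and Theorems~\ref{thm:Serendipity} and~\ref{thm:six} use only the fusion conclusion. So your argument could replace the entire case analysis.
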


To prove Lemma~\ref{lem:contraction}, we base on the \emph{\BM}, which states that, if $\mathcal{R}_\pi$ is a \emph{fusion scheme} of $\mathcal{R}$ for a given partition $\pi = \{ \pi(i) : \ i=0,1,\ldots,d' \}$ of $\{0,1,\ldots,d\}$ with $\pi(0) = \{0\}$, then there exists a unique partition $\rho = \{ \rho(i) : \ i=0,1,\ldots,d' \}$ of $\{0,1,\ldots,d\}$ with $\rho(0) = \{0\}$, such that each $(\rho(j), \pi(i))$-block of the first eigenmatrix $P$ has constant row sums. The idempotents of $\mathcal{R}_\pi$ are $\{ E_{\rho(j)} : \ j = 0,1,\ldots,d' \}$, and the $(\rho(j), \pi(i))$-entry of the first eigenmatrix of $\mathcal{R}_\pi$ equals the constant row sums of the $(\rho(j), \pi(i))$-block of the first eigenmatrix of $\mathcal{R}$. We mention that the \BM \ also works for the second eigenmatrix $Q$. 

For ease of notations, when we are using the \BM \ for fusion schemes, we always use $\pi$ for the partition of relations, and $\rho$ for the corresponding partition of idempotents. We continue to use the same notations, as in \cite{vDKX25}, 
\begin{align*}
    \I_1, \ldots, \I_p  \correspond  \J_1,\ldots,\J_q
\end{align*}
to show the correspondence, where $\I_1, \ldots, \I_p$ are all the partition blocks of $\pi$ with size $\geq 2$, and $\J_1,\ldots,\J_q$ are all the partition blocks of $\rho$ with size $\geq 2$. 

The following is a direct from \BM. 

\begin{proposition}\label{prop:Ubiquitous}
    Let $\mathcal{R}$ be a $d$-class association scheme with relations $A_0,A_1,\ldots,A_d$ and idempotents $E_0,E_1,\ldots,E_d$. Let $\I \subseteq \{1,\ldots,d\}$ be a set of size $3$. If $\{ A_i : \ i\in \I \}$ fuses, then exactly one of the two following happens. 
    \begin{enumerate}
        \item There is a set $\J \subseteq \{1,\ldots,d\}$ of size $3$ such that 
        \begin{align*}
           \I \correspond  \J.
        \end{align*}
        
        \item There are $\J_1, \J_2 \subseteq \{1,\ldots,d\}$ of size $2$ with $\J_1 \cap \J_2 = \emptyset$ and 
        \begin{align*}
           \I \correspond  \J_1, \J_2.
        \end{align*}
    \end{enumerate}
\end{proposition}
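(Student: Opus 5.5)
Fusing the triple $\{A_i : i\in\I\}$ gives a fusion scheme $\mathcal{R}_\pi$ with $d-2$ classes, where $\pi$ has exactly one nontrivial block, namely $\I$, and all other blocks are singletons. We apply the \BM\ to obtain the unique partition $\rho$ of $\{0,1,\ldots,d\}$ with $\rho(0)=\{0\}$, and we use that $\rho$ and $\pi$ have the same number of blocks, $d'+1=d-1$. Since $\rho$ partitions $d+1$ indices into $d-1$ blocks, $\sum_j(|\rho(j)|-1)=2$. The block $\rho(0)=\{0\}$ is a singleton, so the nonsingleton blocks of $\rho$ are contained in $\{1,\ldots,d\}$. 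The only ways to distribute a total excess of $2$ are one block of size $3$, or two blocks of size $2$. These blocks are disjoint because $\rho$ is a partition. This gives exactly the two alternatives (1) and (2).

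It remains to check the claim that $\rho$ has the same number of blocks as $\pi$, and that the two cases are mutually exclusive. For the first, the fusion scheme $\mathcal{R}_\pi$ is an association scheme with $d'+1$ relations. Its Bose--Mesner algebra therefore has dimension $d'+1$, so it has exactly $d'+1$ minimal idempotents. The \BM\ identifies these idempotents as the $E_{\rho(j)}$, one for each block of $\rho$, so $|\rho|=|\pi|$. For exclusivity, note that the \BM\ asserts $\rho$ is unique. A single partition cannot have both a block of size $3$ and two blocks of size $2$, since the total excess would then be $4\neq 2$. Hence exactly one of (1), (2) holds.

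The only point needing care is this equality of block counts, which we take from the standard fact that a commutative semisimple algebra of dimension $m$ has exactly $m$ primitive idempotents. Everything else is counting. We would present the argument in that order: first $|\rho|=|\pi|=d-1$, then the excess count $\sum_j(|\rho(j)|-1)=2$, then the case split with uniqueness.
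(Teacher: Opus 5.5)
Your proposal is correct and follows the approach the paper intends. The paper gives no written proof and only says the proposition follows directly from the Bannai--Muzychuk criterion. Your argument supplies that step: $\rho$ has $d-1$ blocks on $d+1$ indices with $\rho(0)=\{0\}$, so the excess of $2$ is carried either by one block of size $3$ or by two disjoint blocks of size $2$, and uniqueness of $\rho$ gives exclusivity. Your dimension argument for $|\rho|=|\pi|$ is harmless but not needed, since the paper's statement of the criterion already indexes both partitions by $0,1,\ldots,d'$.
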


For a fusing triple $\{ A_i : \ i\in \I \}$, $\I \in \binom{\{1,\ldots,d\}}{3}$, we call that it is of \emph{type-$1$} if Proposition~\ref{prop:Ubiquitous}~(1) happens, and is of \emph{type-$2$} if Proposition~\ref{prop:Ubiquitous}~(2) does. 

\begin{example}\label{ex:fan}
    Let $\mathcal{R}$ be a $4$-class amorphic association scheme. Assume that the principle part of its second eigenmatrix $Q$ has the canonical form. 
Even though we have already know that $\mathcal{R}$ is amorphic, we may also use the \BM to check the fusions.
To see it more clearly, we draw a rectangles representing the submatrix $Q'$ indexed by the first three rows/relations and columns/idempotents. 
Observe that $Q'$ has constant row sum, and in the same rows of $Q'$, there are only constant columns outside $Q'$. According to the \BM, we have the fusion $ \{1,2,3\} \correspond \{1,2,3\} $. 
Similarly, we can also check $\{2,3,4\} \correspond \{2,3,4\}$. The two corresponding rectangles are depicted in Figure~\ref{fig:canonical}.
\end{example}

\begin{figure}[htbp]
    \centering
    \begin{tikzpicture}[scale=0.6,font=\small]
         \tikzstyle{vertex}=[draw,circle,minimum size=0pt,inner sep=0pt]
         \tikzstyle{edge} = [draw,thick,-]

    \draw (0,0) rectangle (3,-3); 
    \draw (1,-1) rectangle (4, -4);

    \node at (0.5,-0.5) {$b_1$};
    \node at (0.5,-1.5) {$a_1$};
    \node at (0.5,-2.5) {$a_1$};
    \node at (0.5,-3.5) {$a_1$};

    \node at (1.5,-0.5) {$a_2$};
    \node at (1.5,-1.5) {$b_2$};
    \node at (1.5,-2.5) {$a_2$};
    \node at (1.5,-3.5) {$a_2$};

    \node at (2.5,-0.5) {$a_3$};
    \node at (2.5,-1.5) {$a_3$};
    \node at (2.5,-2.5) {$b_3$};
    \node at (2.5,-3.5) {$a_3$};

    \node at (3.5,-0.5) {$a_4$};
    \node at (3.5,-1.5) {$a_4$};
    \node at (3.5,-2.5) {$a_4$};
    \node at (3.5,-3.5) {$b_4$};

    \draw[-] (-0.3,0) -- (-0.5,0) -- (-0.5,-4) -- (-0.3, -4); 
    \draw[-] (4.3,0) -- (4.5,0) -- (4.5,-4) -- (4.3, -4); 

    \end{tikzpicture}
\caption{The principle part of $Q$.}
\label{fig:canonical}
\end{figure}
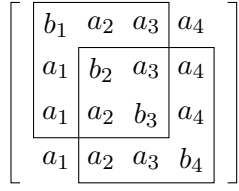

Now we look at Lemma~\ref{lem:contraction}. 
Without loss of generality, we assume $(i,j,k,\ell) = (1,2,3,4)$. We write $[d] = \{1,2,\ldots,d\}$. 
According to the types of the two fusing triples $\{A_1,A_2,A_3\}$ and $\{A_2, A_3, A_4\}$, 
we get Cases I, II and III, and for each of them, we have several possible subcases.  For each subcase, we choose a representative example. 

For each case, given $I,I_1, I_2, J, J_1, J_2$, we draw rectangles $\{ 1,2,3\} \times I, \{ 1,2,3\} \times I_1, \{ 1,2,3\} \times I_2$ and $\{ 2,3,4\} \times J, \{ 2,3,4\} \times J_1, \{ 2,3,4\} \times J_2$, so that all the cases are visualized in Figures~\ref{fig:dod1}~\ref{fig:dod2}~and~\ref{fig:dod3}. 
It will be helpful for the readers to check the proofs with the figures. 

{\bf Case I:} Both fusing triples are of type-$1$.

There are subsets $I, J \in \binom{[d]}{3}$ such that 
$\{ 1,2,3 \} \correspond I $ and $ \{2,3,4\} \correspond J$.
I will write $I = abc$ if $I = \{a,b,c\}$, and so on.  
There are $4$ subcases according to the size of $I\cap J$. 

Case I.1: $I = 123, J = 456$;  

Case I.2: $I = 123, J = 345$; 

Case I.3: $I = 123, J = 234$; 

Case I.4: $I = 123, J = 123$. 

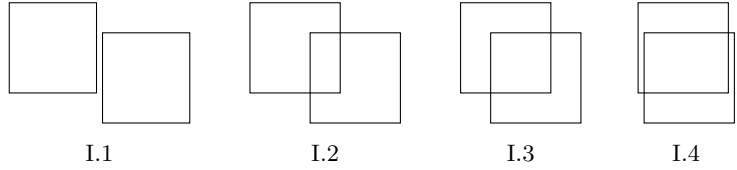
\begin{figure}[htbp]
    \centering
    \begin{tikzpicture}[scale=0.4,font=\scriptsize]
         \tikzstyle{vertex}=[draw,circle,minimum size=0pt,inner sep=0pt]
         \tikzstyle{edge} = [draw,thick,-]
    
    \def\margin{0.1}; 
    \def\gap{2};
    \def\start{-20};
    \def\starta{-20};
    \def\startb{-20};
    \def\startc{-20};
    
    \def\height{12};
    \def\heighta{6};
    \def\heightb{0};
    \def\heightc{-6};
    
         \draw (\start,\height) rectangle (\start+3-\margin,\height-3); 
         \draw (\start+3+\margin,\height-1) rectangle (\start+6, \height-4);

         \node at (\start+3,\height-5) {I.1};

         \draw (\start+6+\gap,\height) rectangle (\start+9+\gap,\height-3); 
         \draw (\start+8+\gap,\height-1) rectangle (\start+11+\gap, \height-4);

         \node at (\start+8.5+\gap,\height-5) {I.2};

         \draw (\start+11+2*\gap,\height) rectangle (\start+14+2*\gap,\height-3); 
         \draw (\start+12+2*\gap,\height-1) rectangle (\start+15+2*\gap, \height-4);

         \node at (\start+13+2*\gap,\height-5) {I.3};
         
         \draw (\start+15+3*\gap-\margin,\height) rectangle (\start+18+3*\gap-\margin,\height-3); 
         \draw (\start+15+3*\gap+\margin,\height-1) rectangle (\start+18+3*\gap+\margin, \height-4);

         \node at (\start+16.5+3*\gap,\height-5) {I.4};

    \end{tikzpicture}
    \caption{$4$ subcases of Case I.}
    \label{fig:dod1}
\end{figure}

{\bf Case II:} One of the fusing triple is of type-$1$ and the other is of type-$2$.  

There are subset $I \in \binom{[d]}{3}$ and $J_1,J_2 \in \binom{[d]}{2}$ with $J_1\cap J_2 = \emptyset$, such that 
$\{ 1,2,3 \} \correspond I $ and $ \{2,3,4\} \correspond J_1, J_2$.
There are $5$ subcases. 

Case II.1: $I = 123, J_1 = 45, J_2 = 67$.

Case II.2: $I = 123, J_1 = 34, J_2 = 56$.

Case II.3: $I = 123, J_1 = 23, J_2 = 45$.

Case II.4: $I = 234, J_1 = 12, J_2 = 45$.

Case II.5: $I = 123, J_1 = 12, J_2 = 34$.

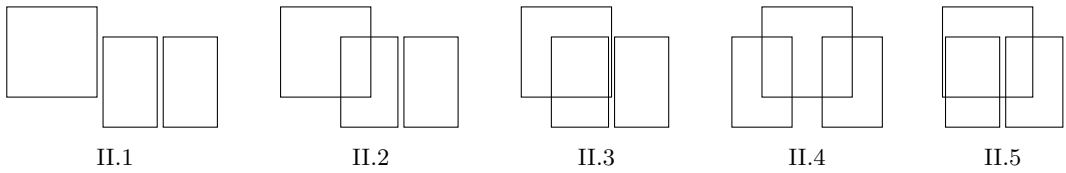
\begin{figure}[htbp]
    \centering
    \begin{tikzpicture}[scale=0.4,font=\scriptsize]
         \tikzstyle{vertex}=[draw,circle,minimum size=0pt,inner sep=0pt]
         \tikzstyle{edge} = [draw,thick,-]
    
    \def\margin{0.1}; 
    \def\gap{2};
    \def\start{-20};
    \def\starta{-20};
    \def\startb{-20};
    \def\startc{-20};
    
    \def\height{12};
    \def\heighta{6};
    \def\heightb{0};
    \def\heightc{-6};
         
         \draw (\starta-\margin,\heighta) rectangle (\starta+3-\margin, \heighta-3);
         \draw (\starta+3+\margin,\heighta-1) rectangle (\starta+5-\margin,\heighta-4);
         \draw (\starta+5+\margin,\heighta-1) rectangle (\starta+7-\margin,\heighta-4);

         \node at (\starta+3.5+0*\gap,\heighta-5) {II.1};

         \draw (\starta+7+\gap,\heighta) rectangle (\starta+10+\gap, \heighta-3);
         \draw (\starta+9+\gap,\heighta-1) rectangle (\starta+11 + \gap-\margin,\heighta-4);
         \draw (\starta+11 + \gap +\margin,\heighta-1) rectangle (\starta+13+\gap-\margin,\heighta-4);

         \node at (\starta+10+1*\gap,\heighta-5) {II.2};

         \draw (\starta+13+2*\gap,\heighta) rectangle (\starta+16+2*\gap, \heighta-3);
         \draw (\starta+14+2*\gap,\heighta-1) rectangle (\starta+16 + 2*\gap-\margin,\heighta-4);
         \draw (\starta+16 + 2*\gap +\margin,\heighta-1) rectangle (\starta+18+2*\gap-\margin,\heighta-4);

         \node at (\starta+15.5+2*\gap,\heighta-5) {II.3};

         \draw (\starta+19+3*\gap,\heighta) rectangle (\starta+22+3*\gap, \heighta-3);
         \draw (\starta+18+3*\gap,\heighta-1) rectangle (\starta+20 + 3*\gap,\heighta-4);
         \draw (\starta+21 + 3*\gap,\heighta-1) rectangle (\starta+23+3*\gap,\heighta-4);

         \node at (\starta+20.5+3*\gap,\heighta-5) {II.4};

         \draw (\starta+23+4*\gap,\heighta) rectangle (\starta+26+4*\gap, \heighta-3);
         \draw (\starta+23+4*\gap+\margin,\heighta-1) rectangle (\starta+25 + 4*\gap-\margin,\heighta-4);
         \draw (\starta+25 + 4*\gap+\margin,\heighta-1) rectangle (\starta+27+4*\gap,\heighta-4);

         \node at (\starta+25+4*\gap,\heighta-5) {II.5};

    \end{tikzpicture}
    \caption{$5$ subcases of Case II.}
    \label{fig:dod2}
\end{figure}

{\bf Case III:} Both fusing triples are of type-$2$. 

There are subsets $I_1, I_2, J_1, J_2 \in \binom{[d]}{2}$ with $I_1\cap I_2 = J_1 \cap J_2 = \emptyset$, such that 
$\{ 1,2,3 \} \correspond I_1, I_2 $ and $ \{2,3,4\} \correspond J_1, J_2$. 
There are $9$ subcases. 

Case III.1: $I_1 = 12, I_2 = 34, J_1 = 56, J_2 = 78$. 

Case III.2: $I_1 = 12, I_2 = 34, J_1 = 45, J_2 = 67$. 

Case III.3: $I_1 = 12, I_2 = 34, J_1 = 34, J_2 = 56$. 

Case III.4: $I_1 = 23, I_2 = 45, J_1 = 12, J_2 = 56$. 

Case III.5: $I_1 = 12, I_2 = 34, J_1 = 23, J_2 = 56$. 

Case III.6: $I_1 = 12, I_2 = 34, J_1 = 12, J_2 = 45$. 

Case III.7: $I_1 = 12, I_2 = 34, J_1 = 23, J_2 = 45$. 

Case III.8: $I_1 = 12, I_2 = 34, J_1 = 12, J_2 = 34$. 

Case III.9: $I_1 = 12, I_2 = 34, J_1 = 14, J_2 = 23$. 

\begin{figure}[htbp]
    \centering
    \begin{tikzpicture}[scale=0.4,font=\scriptsize]
         \tikzstyle{vertex}=[draw,circle,minimum size=0pt,inner sep=0pt]
         \tikzstyle{edge} = [draw,thick,-]
    
    \def\margin{0.1}; 
    \def\gap{2};
    \def\start{-20};
    \def\starta{-20};
    \def\startb{-20};
    \def\startc{-20};
    
    \def\height{12};
    \def\heighta{6};
    \def\heightb{0};
    \def\heightc{-6};

         \draw (\startb,\heightb) rectangle (\startb+2-\margin, \heightb-3);
         \draw (\startb+2+\margin,\heightb) rectangle (\startb+4-\margin, \heightb-3);
         \draw (\startb+4+\margin,\heightb-1) rectangle (\startb+6-\margin,\heightb-4);
         \draw (\startb+6+\margin,\heightb-1) rectangle (\startb+8-\margin,\heightb-4);

         \node at (\startb+4+0*\gap,\heightb-5) {III.1};

         \draw (\startb+8+\gap,\heightb) rectangle (\startb+10+\gap-\margin, \heightb-3);
         \draw (\startb+10+\gap+\margin,\heightb) rectangle (\startb+12 + \gap, \heightb-3);
         \draw (\startb+11 + \gap,\heightb-1) rectangle (\startb+13 + \gap-\margin,\heightb-4);
         \draw (\startb+13+\gap+\margin,\heightb-1) rectangle (\startb+15+\gap,\heightb-4); 

         \node at (\startb+11.5+1*\gap,\heightb-5) {III.2};

         \draw (\startb+15+2*\gap-\margin,\heightb) rectangle (\startb+17+2*\gap-\margin, \heightb-3);
         \draw (\startb+17+2*\gap,\heightb) rectangle (\startb+19 + 2*\gap -\margin, \heightb-3);
         \draw (\startb+17 + 2*\gap + \margin,\heightb-1) rectangle (\startb+19 + 2*\gap,\heightb-4);
         \draw (\startb+19+2*\gap + \margin,\heightb-1) rectangle (\startb+21+2*\gap,\heightb-4); 
         
         \node at (\startb+18+2*\gap,\heightb-5) {III.3};

         \draw (\startb+22+3*\gap,\heightb) rectangle (\startb+24+3*\gap-\margin, \heightb-3);
         \draw (\startb+24 +3*\gap+ \margin,\heightb) rectangle (\startb+26 + 3*\gap, \heightb-3);
         \draw (\startb+21+3*\gap,\heightb-1) rectangle (\startb+23+3*\gap,\heightb-4);
         \draw (\startb+25+3*\gap,\heightb-1) rectangle (\startb+27+3*\gap,\heightb-4);
         
         \node at (\startb+24+3*\gap,\heightb-5) {III.4};
         
         \draw (\startc+0+0*\gap+\margin,\heightc) rectangle (\startc+2+0*\gap-\margin, \heightc-3);
         \draw (\startc+2+0*\gap+\margin,\heightc) rectangle (\startc+4 -\margin + 0*\gap, \heightc-3);
         \draw (\startc+1 + 0*\gap,\heightc-1) rectangle (\startc+3 + 0*\gap -\margin,\heightc-4);
         \draw (\startc+3+0*\gap+\margin,\heightc-1) rectangle (\startc+5+0*\gap,\heightc-4); 

         \node at (\startc+3+0*\gap,\heightc-5) {III.5};

         \draw (\startc+5+\gap,\heightc) rectangle (\startc+7+\gap-\margin, \heightc-3);
         \draw (\startc+7+\gap+\margin,\heightc) rectangle (\startc+9+\gap, \heightc-3);
         \draw (\startc+5+\gap+\margin,\heightc-1) rectangle (\startc+7+\gap,\heightc-4);
         \draw (\startc+8+\gap+\margin,\heightc-1) rectangle (\startc+10+\gap,\heightc-4);

         \node at (\startc+7.5+1*\gap,\heightc-5) {III.6};

         \draw (\startc+10+2*\gap,\heightc) rectangle (\startc+12+2*\gap-\margin, \heightc-3);
         \draw (\startc+12+2*\gap+\margin,\heightc) rectangle (\startc+14+2*\gap, \heightc-3);
         \draw (\startc+11+2*\gap,\heightc-1) rectangle (\startc+13+2*\gap-\margin,\heightc-4);
         \draw (\startc+13+2*\gap,\heightc-1) rectangle (\startc+15+2*\gap,\heightc-4);

         \node at (\startc+12.5+2*\gap,\heightc-5) {III.7};

         \draw (\startc+15+3*\gap,\heightc) rectangle (\startc+17+3*\gap-\margin, \heightc-3);
         \draw (\startc+17+3*\gap+\margin,\heightc) rectangle (\startc+19+3*\gap, \heightc-3);
         \draw (\startc+15+3*\gap+\margin,\heightc-1) rectangle (\startc+17+3*\gap,\heightc-4);
         \draw (\startc+17+3*\gap+2*\margin,\heightc-1) rectangle (\startc+19+3*\gap+\margin,\heightc-4);

         \node at (\startc+18+3*\gap,\heightc-5) {III.8};

         \draw (\startc+20+4*\gap,\heightc) rectangle (\startc+22+4*\gap-\margin, \heightc-3);
         \draw (\startc+22+4*\gap+\margin,\heightc) rectangle (\startc+24+4*\gap, \heightc-3);
         \draw (\startc+21+4*\gap+\margin,\heightc-1) rectangle (\startc+23+4*\gap-\margin,\heightc-4);
         \draw[-] (\startc+20+4*\gap,\heightc-1) -- (\startc+21+4*\gap-\margin,\heightc-1) -- (\startc+21+4*\gap-\margin,\heightc-4) -- (\startc+20+4*\gap,\heightc-4);
         \draw[-] (\startc+24+4*\gap,\heightc-1) -- (\startc+23+4*\gap+\margin,\heightc-1) -- (\startc+23+4*\gap+\margin,\heightc-4) -- (\startc+24+4*\gap,\heightc-4);
         \draw[densely dotted] (\startc+24+4*\gap,\heightc-1) -- (\startc+25+4*\gap-\margin,\heightc-1) 
         (\startc+24+4*\gap,\heightc-4) -- (\startc+25+4*\gap-\margin,\heightc-4)
         (\startc+20+4*\gap,\heightc-1) -- (\startc+19+4*\gap+\margin,\heightc-1) 
         (\startc+20+4*\gap,\heightc-4) -- (\startc+19+4*\gap+\margin,\heightc-4);

         \node at (\startc+22+4*\gap,\heightc-5) {III.9};

    \end{tikzpicture}
    \caption{$9$ subcases of Case III.}
    \label{fig:dod3}
\end{figure}
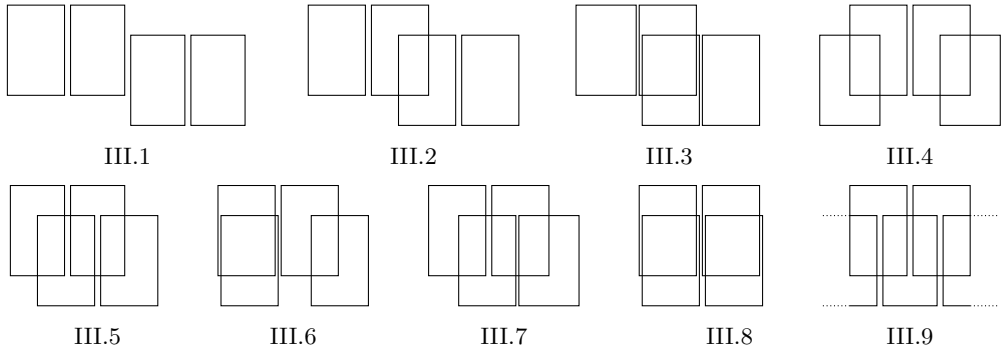

\begin{remark}
    Compare Figure~\ref{fig:canonical} with I.3 of Figure~\ref{fig:dod1}, and so the fusing triples $\{A_1,A_2,A_3\}$ and $\{A_2, A_3, A_4\}$ of the association scheme in Example~\ref{ex:fan} belongs to Case I.3. Note that the pictures in Figure~\ref{fig:canonical} reveals the submatrix of the second eigenmatrix $Q$ with constant row sums and without constant columns. If we consider the first eigenmatrix $P$, we need to exchange the rows and columns for each picture.  
\end{remark}

\begin{lemma}\cite[Lemma~2.1]{vDKX25}\label{lem:eigenmatrix}
    Let $M$ be the principal part of one of the eigenmatrices of an association scheme. Let $t\geq 2$. For any $t$ rows of $M$, there are at least $t$ columns of $M$ that are not constant on the $t$ rows. 
\end{lemma}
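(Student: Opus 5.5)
The plan is a linear-algebra argument using the invertibility of the full eigenmatrix together with the orthogonality relation $PQ=|X|I$. I do it for $P$; the case of $Q$ is the same with the roles of $P$ and $Q$ exchanged, since $QP=|X|I$ and $P_{j0}=1$ for all $j$.

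\textbf{Setup.} Let $M$ be the principal part of $P$. Fix $t\geq 2$ rows, indexed by $r_1,\ldots,r_t\in\{1,\ldots,d\}$. Let $S\subseteq\{1,\ldots,d\}$ be the set of columns of $M$ that are not constant on these rows. The goal is $|S|\geq t$.

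\textbf{Step 1: the difference vectors are independent.} View the rows of the full matrix $P$ as vectors in $\mathbb{R}^{d+1}$, and set
\[
w_i = P_{r_i,\cdot}-P_{r_1,\cdot}, \qquad i=2,\ldots,t.
\]
Since $P$ is invertible, its rows are linearly independent. Hence $w_2,\ldots,w_t$ are linearly independent, and they span a subspace $W$ of dimension $t-1$.

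\textbf{Step 2: where the $w_i$ live.} Each $w_i$ vanishes on column $0$, because $P_{j0}=1$ for every $j$. It also vanishes on every column of $M$ that is constant on the chosen rows. So every $w_i$ is supported on $S$.

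\textbf{Step 3: the $w_i$ have zero sum.} Use $PQ=|X|I$ together with $Q_{i0}=1$ for all $i$. This gives
\[
\sum_{i=0}^d P_{ri} = (PQ)_{r0} = |X|\,\delta_{r0}.
\]
So every row $r\neq 0$ of $P$ has entry sum $0$. Therefore each $w_i$ also has entry sum $0$.

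\textbf{Step 4: count dimensions.} By Steps 2 and 3, $W$ is contained in
\[
\{v\in\mathbb{R}^{d+1} : \operatorname{supp}(v)\subseteq S,\ \textstyle\sum_i v_i=0\}.
\]
This space has dimension $|S|-1$ when $S\neq\emptyset$, and dimension $0$ when $S=\emptyset$. Since $\dim W=t-1\geq 1$, the set $S$ is nonempty. We then get $t-1\leq |S|-1$, that is, $|S|\geq t$.

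The only step needing care is Step 3. Independence of the differences alone only gives $|S|\geq t-1$. The extra column comes from the zero row-sum relation, which in turn comes from the all-ones column of $Q$.
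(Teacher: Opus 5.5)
Your proof is correct and complete. The $t-1$ difference vectors are independent because the eigenmatrix is invertible, they are supported on the non-constant columns because column $0$ is all-ones, and the zero row sums from $PQ=QP=|X|I$ reduce the available dimension to $|S|-1$; the same works for $Q$ with the roles swapped.

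The paper does not reprove this lemma; it quotes it from \cite{vDKX25}. Your rank count uses exactly the tools the paper itself relies on elsewhere, e.g.\ the constant-row-sum argument via $PQ=QP=|X|I$ in Case I.2 of the proof of Lemma~\ref{lem:contraction}, so nothing is missing.
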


\begin{proof}[Proof of Lemma~\ref{lem:contraction}]

We look at the Case II.1. 
For the square $\{ 1,2,3 \} \times \{1,2,3\}$, it follows from the \BM \ that $Q_{1,j} = Q_{2,j} = Q_{3,j}$ for all $j$ with $1\leq j \leq d$ but $j\neq 1,2,3$, and it holds that $Q_{i,1}+Q_{i,2}+Q_{i,3}$ is constant for $i=1,2,3$. Similarly, for the squares $\{ 2,3,4 \} \times \{4,5\}$ and $\{2,3,4\} \times \{6,7\}$, we have that $Q_{2,j} = Q_{3,j} = Q_{4,j}$ for all $j$ with $1\leq j \leq d$ but $j\neq 4,5,6,7$, and it holds that $Q_{i,4}+Q_{i,5}$ is constant for $i=2,3,4$, and that $Q_{i,6}+Q_{i,7}$ is constant for $i=2,3,4$. In the proof, we will use the \BM \ for all the cases to get such equations over and over without listing out the equations. 

First we deal with the case I.4. 
For the four rows of the second eigenmatrix $Q$ indexed by $A_1, A_2, A_3, A_4$, it follows from \BM that there are at most $3$ non-constant columns. However, this contracts Lemma~\ref{lem:eigenmatrix}, which rules out the Case I.4. 

Next we turn to the Case III.3. 
By fusing $A_1, A_2, A_3$, let $Q'$ be the second eigenmatrix of the fusing scheme $\mathcal{R}_{1,2,3}$. 
According to the \BM, we have $Q'_{123,12} = Q_{3,1}+Q_{3,2} = Q_{4,1}+Q_{4,2} = Q'_{4,12}$, $Q'_{123,34} = Q_{3,3}+Q_{3,4} = Q_{4,3}+Q_{4,4} = Q'_{4,34}$, and for $j$ with $5\leq j \leq d$, $Q'_{123,j} = Q_{3,j} = Q_{4,j} = Q'_{4,j}$. Therefore, in $Q'$, the two rows indexed by $A_{123}$ and $A_{4}$ are exactly the same, which contradicts Lemma~\ref{lem:eigenmatrix}. The argument also works for the Case III.8. Hence, we prove that the Cases III.3 and III.8 are not possible. 

Next, for the Case I.1,
by the \BM, it is easy to see that $Q_{2,j} = Q_{3,j}$ for all $j$. Hence we find the two rows of $Q$ indexed by $A_2$ and $A_3$ are the same, contradicting Lemma~\ref{lem:eigenmatrix}. The same argument can also be applied to the Cases II.1 and III.1. This proves that the Cases I.1, II.1 and III.1 are not possible. 

For the Case I.2, by the \BM, we see that $Q_{2,j} = Q_{3,j}$ for all $1\leq j\leq d$ except for $j=3$. Recall that $PQ=QP=\left|X\right| I$ from \cite[Eq.~2.6]{BBIT21}. This implies that the principle part of $Q$ has constant row sum. This proves $Q_{2,3} = Q_{3,3}$, and so the two rows are the same. Similar argument can be applied to the Cases II.2 and III.2. Henceforth, the Cases I.2, II.2 and III.2 are impossible. 

For the Case II.4, by the \BM, we have equations $Q_{2,j} = Q_{3,j}$ for $j=1,3,5,6,\ldots,d$. Meanwhile, we have $Q_{2,1} + Q_{2,2} = Q_{3,1}+Q_{3,2}$ and $Q_{2,4} + Q_{2,5} = Q_{3,4} + Q_{3,5}$. These equations prove the two rows $A_2$ and $A_3$ are the same. Similar proof also work for the Cases III.4, III.5 and III.7. Consequently, the Cases II.4, III.4, III.5 and III.7 are also ruled out. 

It remains the Cases I.3, II.3, II. 5, III.6 and III.9. 
For convenience, we will always denote the second eigenmatrix of the fusion scheme to be $Q'$. 

For the Case I.3, if we fuse $A_1,A_2,A_3$, in the fusion scheme $\mathcal{R}_{1,2,3}$ it follows from the \BM \ that $Q'_{123,j} = Q_{3,j} = Q_{4,j} = Q'_{4,j}$ for all $j\geq 5$ and so in $\mathcal{R}_{1,2,3}$ we have $\{123,4\} \correspond \{123,4\}$. By symmetry, we know $\{A_1, A_{2,3,4}\}$ is a fusing pair of $\mathcal{R}_{2,3,4}$. 

For the Case II.3, in the fusion scheme $\mathcal{R}_{1,2,3}$, we have $Q'_{123,123} = Q_{3,1}+Q_{3,2}+Q_{3,3} = Q_{4,1}+Q_{4,2}+Q_{4,3} = Q'_{4,123}$, and $Q'_{123,j} = Q_{3,j} = Q_{4,j} = Q'_{4,j}$ for all $j\geq 6$. Therefore, we have $\{ 123,4 \}\correspond \{4,5\}$ in $\mathcal{R}_{1,2,3}$. In the fusion scheme $\mathcal{R}_{2,3,4}$, we have $Q'_{1,45} = Q_{1,4}+Q_{1,5} = Q_{3,4}+Q_{3,5} = Q'_{234,45}$, and $Q'_{1,j} = Q_{1,j} = Q_{3,j} = Q'_{234,j}$ for all $j\geq 6$. Hence, we have $\{1,234\}\correspond \{1,23\}$ in $\mathcal{R}_{2,3,4}$. 

For the Case II.5, in the fusion scheme $\mathcal{R}_{1,2,3}$, 
it holds $Q'_{123,j} = Q_{3,j} = Q_{4,j} = Q'_{4,j}$ for all $j\geq 5$. So we have $\{ 123,4 \}\correspond \{ 123,4 \}$ in $\mathcal{R}_{1,2,3}$. In the fusion scheme $\mathcal{R}_{2,3,4}$, it holds $Q'_{1,j} = Q_{1,j} = Q_{3,j} = Q'_{234,j}$ for all $j\geq 5$. Consequently, we have $\{1,234\}\correspond \{ 12,34 \}$ in $\mathcal{R}_{2,3,4}$. 

For the Case III.6, in the fusion scheme $\mathcal{R}_{1,2,3}$, we get $Q'_{123,12} = Q_{3,1}+Q_{3,2} = Q_{4,1}+Q_{4,2} = Q'_{4,12}$, and $Q'_{123,j} = Q_{3,j} = Q_{4,j} = Q'_{4,j}$ for all $j\geq 6$. Thus, we get $\{123,4\} \correspond \{ 34,5 \}$ in $\mathcal{R}_{1,2,3}$. By symmetry, we know $\{A_1, A_{2,3,4}\}$ is a fusing pair of $\mathcal{R}_{2,3,4}$. 

Finally, for the Case III.9, in the fusion scheme $\mathcal{R}_{1,2,3}$, it holds $Q'_{123,j} = Q_{3,j} = Q_{4,j} = Q'_{4,j}$ for all $j\geq 5$. Therefore, we have $\{123,4\}\correspond \{ 12,34 \}$. By symmetry, we know $\{A_1, A_{2,3,4}\}$ is a fusing pair of $\mathcal{R}_{2,3,4}$. 

Above all, for all the cases that are not ruled out (Cases I.3, II.3, II. 5, III.6 and III.9), we obtain that $\{A_{1,2,3}, A_4\}$ is a fusing pair of $\mathcal{R}_{1,2,3}$ and that $\{A_1, A_{2,3,4}\}$ is a fusing pair of $\mathcal{R}_{2,3,4}$, finishing the proof. 
\end{proof}

\section{Proof of the main theorem}

First we deal with the case of $d=5$. 

\begin{lemma}\label{lem:core}
    Let $\mathcal{R}$ be a $5$-class association scheme. Suppose its fusing-relations $3$-hypergraph contains a $3$-sunflower as its subhypergraph. Then the core of the $3$-sunflower is a fusing pair in $\mathcal{R}$. 
\end{lemma}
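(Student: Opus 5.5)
The plan is to work with the principal part of the second eigenmatrix $Q$, using the \BM\ and Lemma~\ref{lem:eigenmatrix}. After relabelling, the $3$-sunflower has core $\{1,2\}$ and edges $123,124,125$. These are three fusing triples, and their union is the whole set $[5]$. The goal is to show that rows $1$ and $2$ of $Q$ differ in at most two columns. Suppose this holds, and call those columns $\J$. By Lemma~\ref{lem:eigenmatrix} rows $1$ and $2$ differ in at least $2$ columns, so $|\J|=2$. Since the principal part of $Q$ has constant row sums (from $PQ=|X|I$, as in the proof of Lemma~\ref{lem:contraction}), the sums of rows $1,2$ over $\J$ agree. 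The \BM\ then gives $\{1,2\}\correspond \J$, so $\{A_1,A_2\}$ fuses.

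For each $k\in\{3,4,5\}$, Proposition~\ref{prop:Ubiquitous} gives a set $D_k\subseteq[5]$ with the following properties.
\begin{enumerate}
\item If the triple $12k$ is of type-$1$, then $\{1,2,k\}\correspond D_k$ with $|D_k|=3$.
\item If it is of type-$2$, then $\{1,2,k\}\correspond \J_1^{(k)},\J_2^{(k)}$ and $D_k=\J_1^{(k)}\sqcup \J_2^{(k)}$ with $|D_k|=4$.
\item Rows $1,2,k$ are equal on every column outside $D_k$.
\item In the type-$2$ case, rows $1,2,k$ have equal sums on each pair $\J_i^{(k)}$.
\end{enumerate}
So rows $1$ and $2$ differ only inside $T=D_3\cap D_4\cap D_5$. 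Moreover, for $k\neq \ell$ the four rows $1,2,k,\ell$ are constant outside $D_k\cup D_\ell$. Lemma~\ref{lem:eigenmatrix} therefore forces $|D_k\cup D_\ell|\ge 4$, and applying it to all five rows forces $D_3\cup D_4\cup D_5=[5]$.

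Next I would do the case analysis. If $|T|\le 2$ we are done. Suppose $|T|\ge 3$. Two of the $D_k$ cannot both have size $3$, since then both would equal $T$ and their union would have size $3$. Hence at least two of them, say $D_4$ and $D_5$, have size $4$, which means the triples $124$ and $125$ are of type-$2$. Their union must cover $[5]$ together with $D_3$, so we may write $D_4=T\cup\{e\}$ and $D_5=T\cup\{f\}$ with $e\neq f$, or $D_4=D_5$ with $D_3$ supplying the remaining column. Now I use the pair structure. The partition $D_4=\J_1^{(4)}\sqcup \J_2^{(4)}$ puts $e$ in a pair with some $c\in T$. Rows $1$ and $2$ agree at $e$, because $e\notin D_5$ (or $e\notin D_3$ in the other configuration). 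Equal pair sums on $\{c,e\}$ then give $Q_{1,c}=Q_{2,c}$, so the difference set shrinks to $T\setminus\{c\}$, which has size at most $2$. If $|T|=4$ all three $D_k$ would coincide, so $D_3\cup D_4\cup D_5$ would have size $4$, contradicting the covering condition; hence $|T|=3$ always.

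The main obstacle is making this bookkeeping airtight across all configurations, in particular the case $D_4=D_5$ of size $4$ with $D_3$ of size $3$ or $4$. If the pair-sum trick does not immediately kill a column there, I would argue as follows. Rows $1,2,4,5$ are constant outside the $4$-set $D_4$, so Lemma~\ref{lem:eigenmatrix} makes all four columns of $D_4$ non-constant on those rows. Then I would play the two pair-partitions of $D_4$ coming from $124$ and $125$ against each other. If they differ, pair sums force equalities that again reduce the difference set of rows $1,2$ to at most two columns. If they coincide, the pairs $\J_i$ give a fusion that contradicts Lemma~\ref{lem:eigenmatrix} when combined with the column outside $D_4$, which belongs to $D_3$. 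As a fallback, Lemma~\ref{lem:contraction} applied to the triples $312$ and $124$ shows that $\{A_{1,2,3},A_4\}$ fuses in $\mathcal{R}_{1,2,3}$. That gives extra constant-sum relations which could dispose of any residual case.
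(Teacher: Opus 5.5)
Your proof is correct, and it takes a genuinely different route from the paper.

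\textbf{The paper's route.} It first uses Lemma~\ref{lem:contraction} to see that $\mathcal{R}_{1,2,3,4}$ is a $2$-class fusion scheme, so $A_5$ is strongly regular, and by symmetry so are $A_3$ and $A_4$. It then works in $P$, where columns $3,4,5$ are two-valued. A type-$1$ triple is settled by pigeonhole on column $3$. In the all-type-$2$ case, the two-valued column $3$ forces the idempotent pairs attached to $\{1,2,4\}$ and $\{1,2,5\}$ to make columns $5$ and $4$ constant on four rows. This contradicts Lemma~\ref{lem:eigenmatrix}.

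\textbf{Your route.} You avoid both the contraction lemma (with its eighteen subcases) and strong regularity. You use only the \BM\ on $Q$, constant row sums, and Lemma~\ref{lem:eigenmatrix}. You confine the disagreement of rows $1,2$ to $T=D_3\cap D_4\cap D_5$ and remove one column of $T$ by a pair sum.

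\textbf{What each buys.} Your argument is more elementary, and it is not tied to $d=5$. For any $d\ge 5$, the case $|T|=4$ is excluded by Lemma~\ref{lem:eigenmatrix} applied to all $d$ rows. Covering $[d]$ then forces two sets $T\cup\{e\}$ and $T\cup\{f\}$ with $e\neq f$. So your argument shows the core of a $3$-sunflower is a fusing pair for every $d\ge 5$. The paper's derivation of strong regularity, by contrast, needs $\mathcal{R}_{1,2,3,4}$ to be $2$-class, and its later step uses $d=5$ explicitly. What the paper's route buys is reuse of Lemma~\ref{lem:contraction}, which it needs for Theorem~\ref{thm:six} anyway.

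\textbf{Your final paragraph is unnecessary.} Take the configuration $D_4=D_5=T\cup\{e\}$. Here $D_3$ contains $T$ and the remaining column $g$, so $D_3=T\cup\{g\}$ and $e\notin D_3$. Hence rows $1,2$ agree at $e$, and the pair of the partition of $D_4$ containing $e$ gives the same reduction as in the case $e\neq f$. This is exactly your parenthetical remark, so the case analysis is already complete. The fallback can be dropped; its ``if they coincide'' branch is not an actual argument in any case.
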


\begin{proof}
    We assume the edges of the $3$-sunflower are $\{ A_1, A_2, A_j \}, j=3,4,5$ so that $\{A_1, A_2\}$ is its core.  
    By Lemma~\ref{lem:contraction}, we know that $A_4$ and $A_{1,2,3}$ is a fusing pair in $\mathcal{R}_{1,2,3}$. Thus, $\mathcal{R}_{1,2,3,4}$ is a $2$-class fusion scheme with relations $A_{1,2,3,4}$ and $A_5$. This implies that $A_5$ is strongly regular. By symmetry, the relations $A_3, A_4$ and $A_5$ are strongly regular. In other words, for $i=3,4,5$, it holds $\left| \{ P_{1i}, \ldots, P_{5i} \} \right| = 2$,
    where $P$ is the first eigenmatrix of $\mathcal{R}$. 

    Suppose that we have a type-$1$ fusing triple. WLOG, we assume $\{1,2,3\}\correspond\{1,2,3\}$. By the \BM, we have $P_{1,j}=P_{2,j}=P_{3,j}$ for each $j=4,5$.  
    Since $A_3$ is strongly regular, by the Pigeonhole Principle, there exists $p,q \in \{ 1,2,3 \}$ with $p\neq q$ such that $P_{p,3} = P_{q,3}$. Therefore, it follows from the \BM \ that $\{1,2\} \correspond \{ p,q \}$, as desired. 

    Now we assume that all the edges of the $3$-sunflower are fusing triples of type-$2$. 
    WLOG, we first assume $\{ 1,2,3 \} \correspond \{ 1,2 \}, \{3,4\}$. By the \BM, we see that $P_{1,j} = P_{2,j}$ and $P_{3,j} = P_{4,j}$ hold for every $j= 4,5$. 
    If it holds $P_{1,3} = P_{2,3}$ or $P_{3,3} = P_{4,3}$, then by the \BM, we see that $\{1,2\} \correspond \{1,2\}$ or $\{ 1,2 \} \correspond \{3,4\}$, respectively. Then we are done. So in the rest of the proof, we assume that $P_{1,3}\neq P_{2,3}$ and $P_{3,3}\neq P_{4,3}$. 
    Recall that $A_3$ is strongly regular. This allows us to assume without losing any generality that $P_{1,3} = P_{3,3}$ and $P_{2,3} = P_{4,3}$. 

    Next we look at the fusing triple $\{ A_1,A_2, A_4 \}$. Because it is of type-$2$, we assume $\{ 1,2,4 \} \correspond I, J$. By the \BM, it must hold $P_{i,3} = P_{j,3}$ both for $i,j\in I$ and for $i,j\in J$. Since $d=5$, at least one of the two equations $I=\{1,3\}$ and $J = \{2,4\}$ must hold. For either equation, we obtain that $P_{1,5} = P_{2,5} = P_{3,5} = P_{4,5}$. 
    Similarly, applying the same argument to $\{A_1,A_2,A_5\}$, we get $P_{1,4} = P_{2,4} = P_{3,4} = P_{4,4}$. Thus, we get two constant columns for the first four rows of the principle part of $P$.  However, this is impossible because of Lemma~\ref{lem:eigenmatrix}. This is the proof.     
\end{proof}

\begin{theorem}\label{thm:Serendipity}
    Let $\mathcal{R}$ be a $5$-class association scheme. If its fusing-relations $3$-hypergraph contains two different $3$-sunflowers as its subhypergraphs, then $\mathcal{R}$ is amorphic. 
\end{theorem}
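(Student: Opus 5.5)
The plan is to reduce to Theorem~\ref{thm:Mellifluous}~(2) by showing that the fusing-relations graph of $\mathcal{R}$ is connected but not a path. In a $5$-class scheme a $3$-sunflower has three edges sharing a common core pair. So the two different sunflowers have cores $\{A_a,A_b\}$ and $\{A_c,A_e\}$ with $\{a,b\}\neq\{c,e\}$, and each has three edges, namely the core together with each of the remaining three relations. By Lemma~\ref{lem:core}, both cores are fusing pairs of $\mathcal{R}$. I would then extract more fusing pairs and more strongly regular relations from the sunflower edges.

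First I collect the strong-regularity information. The proof of Lemma~\ref{lem:core} shows, via Lemma~\ref{lem:contraction}, that every non-core relation of a sunflower is strongly regular. With two different cores, at most one relation lies in both cores, so at least four of the five relations are strongly regular. This holds when the cores share a vertex; when they are disjoint, the fifth relation is a non-core vertex for both, and it is strongly regular as well. I would also try to show that these strongly regular relations are of the same type, i.e.\ Latin or negative Latin square type with a common $n$. The fused $2$-class schemes $\mathcal{R}_{1,2,3,4}$ constructed in that proof give Latin-square-type parameters, and the eigenvalues of two relations appearing in a common fused scheme must be compatible. If this succeeds, Theorem~\ref{thm:Mellifluous}~(3) finishes the proof directly, since at most one relation fails to be of the right type. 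This is the route I would try first.

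As a fallback, I would aim at Theorem~\ref{thm:Mellifluous}~(2). I split into two cases according to whether the cores intersect.

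\textbf{Cores intersect.} Say the cores are $\{1,2\}$ and $\{1,3\}$. Then $\{1,2\}$ and $\{1,3\}$ are fusing pairs. The triples $\{1,2,j\}$ and $\{1,3,j\}$ fuse for $j=4,5$, and the triples involving $\{1,2,3\}$ fuse as well. Applying Lemma~\ref{lem:contraction}'s proof (the case analysis showing which fusions descend), together with the Bannai--Muzychuk analysis of $P$ as in Lemma~\ref{lem:core}, I want to produce a fusing pair joining $4$ or $5$ to the component $\{1,2,3\}$, and then a further pair producing a vertex of degree $3$ or a cycle. The concrete method is the one from Lemma~\ref{lem:core}: use that columns $3,4,5$ (and now also column $2$) take only two values on the five restricted rows. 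Combined with the type-$1$/type-$2$ correspondences of the triples $\{1,2,j\}$ and $\{1,3,j\}$, pigeonhole then forces a pair $\{p,q\}$ of idempotents with equal entries in the relevant column, which yields a new fusing pair.

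\textbf{Cores disjoint.} Say the cores are $\{1,2\}$ and $\{3,4\}$, so all five relations are strongly regular. The triples $\{1,2,j\}$ and $\{3,4,j\}$ fuse for all remaining $j$. I would show that $\{1,2,3\}$ fusing together with $\{3,4,1\}$ fusing, plus strong regularity, gives a fusing pair across the two cores, for instance $\{2,3\}$ or $\{1,3\}$. I would then show that $5$ attaches, again by the Bannai--Muzychuk and pigeonhole argument on the first eigenmatrix. Any connected graph on five vertices that is not a path then suffices.

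The main obstacle is the bookkeeping in these case analyses. Each triple can be of type-$1$ or type-$2$ with various idempotent correspondences, and I must make sure every configuration either contradicts Lemma~\ref{lem:eigenmatrix} (two equal rows, or too few nonconstant columns) or produces the needed extra fusing pairs. I expect the hardest configuration to be the one where the known fusing pairs form a path, say $2$--$1$--$3$ plus a pendant, and I would focus there on excluding the path by exhibiting one extra pair through the pigeonhole argument on a strongly regular column.
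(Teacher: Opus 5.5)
\textbf{Gap in the decisive step.} The step you flag as decisive is exactly where the proof breaks: showing that the strongly regular relations are all of Latin square type or all of negative Latin square type. Strong regularity alone is not enough to conclude amorphy; Van Dam's $4$-class examples have all relations strongly regular and are not amorphic.

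Your justification is that the $2$-class fusions $\mathcal{R}_{1,2,3,4}$ ``give Latin-square-type parameters''. That is false. A $2$-class fusion only tells you that the remaining relation is strongly regular. A strongly regular graph need not have $v=n^2$ and restricted eigenvalues $-t$, $n-t$; the Petersen graph and Paley graphs of non-square prime order are counterexamples. The remark that eigenvalues in a common fused scheme must be ``compatible'' does not fill this.

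Your fallback is to prove directly that the fusing-relations graph of the $5$-class scheme $\mathcal{R}$ is connected and not a path. It stays at the level of ``I want to produce\ldots'' and ``I would show\ldots''. The case analysis that would generate the extra fusing pairs is never carried out, and it targets something much harder than needed.

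\textbf{The missing idea.} Apply Theorem~\ref{thm:Mellifluous}~(2) to the $4$-class fusion scheme obtained by fusing the core, not to $\mathcal{R}$ itself.
\begin{enumerate}
\item By Lemma~\ref{lem:core}, the core $\{A_a,A_b\}$ fuses, so $\mathcal{R}_{a,b}$ is a $4$-class scheme.
\item Each sunflower edge $\{A_a,A_b,A_i\}$ fuses in $\mathcal{R}$. Hence $\{A_{a,b},A_i\}$ is a fusing pair of $\mathcal{R}_{a,b}$ for each of the three non-core indices $i$.
\item So the fusing-relations graph of $\mathcal{R}_{a,b}$ contains a spanning star $K_{1,3}$, which is connected and not a path. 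Therefore $\mathcal{R}_{a,b}$ is amorphic, and the three non-core relations are strongly regular of the same type.
\item Do this for both sunflowers. You get two $3$-sets of relations, each of a single type.
\item The two sets are complements of the two cores in a $5$-set, so they intersect, since $3+3>5$. Hence the types agree.
\item Their union has at least four elements, because the cores are different.
\end{enumerate}
Theorem~\ref{thm:Mellifluous}~(3) then finishes the proof, with no further case analysis beyond Lemma~\ref{lem:core}. This is the paper's argument.
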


\begin{proof}
    If $A_1$ and $A_2$ form a fusing pair, then in the fusion scheme $\mathcal{R}_{1,2}$, $A_{1,2}$ and $A_i$ form a fusing pair for all $i=3,4,5$. By Theorem~\ref{thm:Mellifluous}~(2), we know that $\mathcal{R}_{1,2}$ is amorphic, and hence, $A_3, A_4, A_5$ are strongly regular of the same type. 
    Lemma~\ref{lem:core} allows us to find two fusing pairs of relations for $\mathcal{R}$, then there are at least $4$ of its relations that are strongly regular of the same type. It thus follows from Theorem~\ref{thm:Mellifluous}~(3) that $\mathcal{R}$ is amorphic. 
\end{proof}

\begin{proof}[Proof of Theorem~\ref{thm:six}]
    Thanks to Theorem~\ref{thm:Serendipity}, we only need to consider the case of $d\geq 6$. Denote the association scheme by $\mathcal{R}$. Pick a $3$-sunflower in the fusing-relations $3$-hypergraph of $\mathcal{R}$, and let $\{ A_i, A_j, A_k \}$ be an edge of the $3$-sunflower. Applying Lemma~\ref{lem:contraction}, we know that $A_{i,j,k}$ and $A_{\ell}$ form a fusing pair in the fusion scheme $\mathcal{R}_{i,j,k}$ for all $\ell \in \{1,\ldots,d\}\setminus\{i,j,k\}$. Since $d\geq 6$, from Theorem~\ref{thm:Mellifluous}~(2) we derive that $\mathcal{R}_{i,j,k}$ is amorphic, and so the relations $A_{\ell}, \ell\in \{1,\ldots,d\}\setminus\{i,j,k\}$ are strongly regular of the same type. 
    Apply the above argument for the other $3$-sunflower, we may find $\{i',j',k'\}$ such that $A_{\ell}, \ell\in \{1,\ldots,d\}\setminus\{i',j',k'\}$ are strongly regular of the same type. Because $d\geq 6$, we can always choose proper sets $\{i,j,k\}$ and $\{i',j',k'\}$ such that $\left| \{i,j,k\} \cap \{i',j',k'\} \right| \leq 1$. This implies that there are $d-1$ relations of $\mathcal{R}$ that are strongly regular of the same type. By Theorem~\ref{thm:Mellifluous}~(3), $\mathcal{R}$ must be amorphic. 
\end{proof}

\section{Dualizations}

Let $\mathcal{R}$ be an association scheme with relations $A_0,A_1,\ldots,A_d$ and idempotents $E_0,E_1,\ldots,E_d$. 
Let $Q$ be the second eigenmatrix of $\mathcal{R}$. Recall that for each $j = 1,\ldots,d$, the entries $Q_{i,j}, i = 1,\ldots,d$ are the restricted dual eigenvalues of $E_j$ \cite{Suda12}. 
An idempotent $E_j$ of $\mathcal{R}$ is strongly regular if it has exactly two restricted dual eigenvalues \cite{vDKX25}. 
We say that a tuple of idempotents $\{ E_j : \ j\in J \}$ \emph{fuses} if fusing it gives rise to a fusion scheme, i.e., there exists $I_1,\ldots,I_{\ell}$ such that $I_1,\ldots,I_{\ell} \correspond J$.   
As the dual of the fusing-relations $k$-hypergraph of $\mathcal{R}$, we define the \emph{fusing-idempotents $k$-hypergraph} of $\mathcal{R}$ to be the $k$-hypergraph with vertex set $\{E_1,\ldots,E_d\}$ such that $k$ vertices form an edge if they fuse.  

In \cite{vDKX25}, Van Dam, Koolen and Xiong have proved that the dual of Theorem~\ref{thm:Mellifluous} and the dual of Lemma~\ref{lem:eigenmatrix}. 
We also mention that we may apply the \BM \ to the second eigenmatrix $Q$ \cite{Muzychuk92}. 

Moreover, by interchanging relations and idempotents, and by interchanging $P$ and $Q$, we can easily prove the dual of Lemma~\ref{lem:contraction},  Lemma~\ref{lem:core} and Theorem~\ref{thm:Serendipity}, respectively. 

Finally, since all our tools are dualized, we get the following dual results. 

\begin{theorem}
    Let $d\geq 5$. A $d$-class association scheme is amorphic if its fusing-idempotents $3$-hypergraph contains two different $3$-sunflowers as its subhypergraphs. 
\end{theorem}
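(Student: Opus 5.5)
I will prove the dual theorem by running the proof of Theorem~\ref{thm:six} verbatim, with relations and idempotents interchanged and $P$ and $Q$ interchanged. The Bannai--Muzychuk criterion is symmetric under this exchange: a partition $\rho$ of the idempotents fuses exactly when there is a partition $\pi$ of the relations such that the $(\pi(i),\rho(j))$-blocks of $Q$ have constant row sums. Lemma~\ref{lem:eigenmatrix} holds for both eigenmatrices. The dual of Theorem~\ref{thm:Mellifluous} is available from \cite{vDKX25}. Its hypotheses are: every pair of idempotents fuses; or the fusing-idempotents graph is connected but not a path; or at most one idempotent fails to be strongly regular of (negative) Latin square type. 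In each case the conclusion is that $\mathcal{R}$ is amorphic.

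\textbf{Dual contraction lemma.} If $\{E_i,E_j,E_k\}$ and $\{E_j,E_k,E_\ell\}$ fuse, then $\{E_{i,j,k},E_\ell\}$ fuses in the fusion scheme $\mathcal{R}_{i,j,k}$. I reuse the same case analysis (Cases I--III, subcases by overlap), but now the rectangles are drawn in $P$, with rows being idempotents. Cases I.4, III.3 and III.8 are excluded by Lemma~\ref{lem:eigenmatrix}. Cases I.1, II.1 and III.1 are excluded because two rows coincide. Cases I.2, II.2 and III.2 are excluded by the constant row sums of the principal part of $P$, which follows from $PQ=|X|I$. Cases II.4, III.4, III.5 and III.7 are excluded by the same row-equality argument. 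In the remaining cases I.3, II.3, II.5, III.6 and III.9, the row of $E_{i,j,k}$ in the fused eigenmatrix $P'$ agrees with the row of $E_\ell$ outside an explicit set of at most two fused relation classes. This yields the required fusion.

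\textbf{Dual Lemma~\ref{lem:core} and the case $d=5$.} For a $3$-sunflower with core $\{E_1,E_2\}$ and petals $E_3,E_4,E_5$, contraction shows that $E_{1,2,3,4}$ together with $E_5$ forms a $2$-class fusion. Hence each petal idempotent has exactly two restricted dual eigenvalues, so $|\{Q_{1i},\dots,Q_{5i}\}|=2$ for $i=3,4,5$.

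Next I split by the type of the triples. If some triple is of type~1, a pigeonhole argument on the column of $Q$ belonging to the third idempotent gives a fusing core. If all triples are of type~2, I run the same argument using $Q$. Either the core fuses directly, or the triples $\{1,2,4\}$ and $\{1,2,5\}$ force two constant columns of $Q$ on four rows, contradicting Lemma~\ref{lem:eigenmatrix}.

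With two fusing pairs in hand, I apply the dual of Theorem~\ref{thm:Mellifluous}~(2) to $\mathcal{R}_{1,2}$. This makes at least four idempotents strongly regular of the same type, and the dual of Theorem~\ref{thm:Mellifluous}~(3) then gives amorphic.

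\textbf{Case $d\ge 6$.} Take an edge $\{E_i,E_j,E_k\}$ of the first sunflower. Dual contraction makes the fusing-idempotents graph of $\mathcal{R}_{i,j,k}$ a star on $d-2\ge 4$ vertices, which is connected and not a path. So $\mathcal{R}_{i,j,k}$ is amorphic, and the remaining $d-3$ idempotents are strongly regular of the same type. Doing the same with the second sunflower, and choosing the two edges to meet in at most one index, gives $d-1$ such idempotents. The dual of Theorem~\ref{thm:Mellifluous}~(3) finishes the proof.

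The main obstacle is making sure every step of the contraction lemma dualizes correctly. In particular, the constant row sum argument for the principal part of $P$ must hold: it comes from $\sum_j P_{ij}=0$ for $i\ge 1$. I must also check that the "same type" conclusions transfer between fusion schemes for idempotents exactly as they do for relations.
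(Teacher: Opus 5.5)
Your proposal is correct and is essentially the paper's own proof: the paper obtains this dual theorem by interchanging relations with idempotents and $P$ with $Q$ in Lemma~\ref{lem:contraction}, Lemma~\ref{lem:core}, Theorem~\ref{thm:Serendipity} and the proof of Theorem~\ref{thm:six}, and that is exactly what you do. One small point you inherit from the paper: Case III.3 is not actually excluded by Lemma~\ref{lem:eigenmatrix}, because the rows of $E_{1,2,3}$ and $E_4$ in $P'$ can still differ in the columns of $A_5$ and $A_6$; however, they differ only there and have equal sums over $\{A_5,A_6\}$, so the required fusing pair $\{E_{1,2,3},E_4\}$ follows directly in that case as well, and nothing else in your argument is affected.
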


\begin{corollary}
    Let $d\geq 5$. A $d$-class association scheme is amorphic if all triples of idempotents fuse. 
\end{corollary}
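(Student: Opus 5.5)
The statement is the dual corollary: if all triples of idempotents fuse and $d\geq 5$, then $\mathcal{R}$ is amorphic. The plan is to derive it from the dual of Theorem~\ref{thm:six} (the theorem just stated), in the same way that Corollary~\ref{cor:five} follows from Theorem~\ref{thm:six}. So it suffices to check that the hypothesis forces the fusing-idempotents $3$-hypergraph to contain two different $3$-sunflowers.

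Assume every triple of idempotents among $E_1,\ldots,E_d$ fuses. Then the fusing-idempotents $3$-hypergraph is the complete $3$-uniform hypergraph on $\{E_1,\ldots,E_d\}$. For any pair $\{E_a,E_b\}$, the edges $\{E_a,E_b,E_c\}$ with $c\notin\{a,b\}$ form a subhypergraph. Its edges all contain $\{E_a,E_b\}$. Since $d\geq 5$ there are at least three such edges, so the intersection of all of them is exactly $\{E_a,E_b\}$, which has size $2$. Their union is the full vertex set. Hence this is a $3$-sunflower with core $\{E_a,E_b\}$.

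Taking the two pairs $\{E_1,E_2\}$ and $\{E_1,E_3\}$ gives two $3$-sunflowers with distinct cores, i.e., two different $3$-sunflowers. The dual of Theorem~\ref{thm:six} then yields that $\mathcal{R}$ is amorphic.

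The only real content lies in the dual theorem, which the paper obtains by interchanging $P$ and $Q$ in Lemmas~\ref{lem:contraction} and \ref{lem:core} and Theorem~\ref{thm:Serendipity}. The Bannai--Muzychuk criterion applies symmetrically to $Q$, and the duals of Theorem~\ref{thm:Mellifluous} and Lemma~\ref{lem:eigenmatrix} are available from \cite{vDKX25}, so I take that theorem as given. No step here is a genuine obstacle. The one point to check is that the sunflower definition needs the intersection to have size exactly $2$, which holds because $d\geq 5$ gives at least three petals; in fact two petals would already suffice.
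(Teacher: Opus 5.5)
Your proposal is correct and follows the paper's route: the paper obtains this corollary directly from the dual of Theorem~\ref{thm:six}, just as Corollary~\ref{cor:five} follows from Theorem~\ref{thm:six}. Your observation that the complete fusing-idempotents $3$-hypergraph contains the $3$-sunflowers with cores $\{E_1,E_2\}$ and $\{E_1,E_3\}$ is exactly the step needed to invoke that dual theorem.
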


\section{Further discussion}

In \cite{vDKX26}, Van Dam, Koolen and Xiong construct an infinite family of non-amorphic $d$-class association scheme with exactly $d-2$ relations that are strongly regular of Latin square type from the Brouwer-Pasechnik graphs. For such a $d$-class association scheme, its fusing-relations $3$-hypergraph is (isomorphic to) the hypergraph on the vertex set $\{1,\ldots,d\}$ and the edge set $\{ 12j : \ j=3,\ldots,d \} \cup \{ ijk : \ 3\leq i < j < k \leq d  \}$. In particular, it contains exactly one $3$-sunflower. Then what are the extremal graphs for the fusing-relations $3$-hypergraph to assure the association scheme to be amorphic?    

\section*{Acknowledgement}  
The author would like to thank Jack Koolen and Edwin van Dam for their comments that improved the presentation and quality of this article. 
This work is supported by the National Natural Science Foundation of China (No. 12501502) and Innovation Research Foundation of College of Science at National University of Defense Technology (202501-YJRC-LXY-01).

\bibliographystyle{plain}
\bibliography{ref}

\end{document}